\documentclass[
final
]{dmtcs-episciences}


\usepackage[utf8]{inputenc}
\usepackage{subfigure}

\usepackage[utf8]{inputenc}
\usepackage{subfigure}
\usepackage{amssymb,amsfonts,amsmath, psfrag,eepic,colordvi,graphicx,epsfig,ytableau}
\usepackage[enableskew]{youngtab}

\newcommand{\rmnum}[1]{\romannumeral #1}
 \numberwithin{equation}{section}
\newtheorem{theorem}{Theorem}[section]

\newtheorem{conjecture}[theorem]{Conjecture}

\newtheorem{lemma}[theorem]{Lemma}

%

\usepackage[round]{natbib}

\author{Sherry H.F. Yan\affiliationmark{1} \thanks{She is fully supported by    the National Natural Science Foundation of China (11671366 and  11571320) and  Zhejiang Provincial Natural Science Foundation of China ( LY15A010008 ).}
  \and Robin D.P. Zhou\affiliationmark{2}\thanks{He is fully supported by  the National Natural Science Foundation of China (11671366  and  11626158) and  Zhejiang Provincial Natural Science Foundation of China ( LQ17A010004).}
   }
\title{Refined   Enumeration of  Corners in   Tree-like Tableaux}
\affiliation{
  Department of Mathematics, Zhejiang Normal University,   P.R. China\\
  College of Mathematics Physics and Information, Shaoxing University,  P.R. China  }
\keywords{tree-like tableau, alternative tableau, linked partition, corner}
\received{2017-5-24}
\revised{2017-9-4}
\accepted{2017-9-13}
\begin{document}
\publicationdetails{19}{2017}{3}{6}{3683}
\maketitle
\begin{abstract}
 Tree-like tableaux are certain fillings of Ferrers diagrams originally introduced by Aval et al.,  which are in simple bijections with   permutation tableaux  coming from  Postnikov's study of totally nonnegative Grassmanian and  alternative tableaux  introduced by   Viennot.
In this paper, we confirm two conjectures of Gao et al.  on the  refined  enumeration of non-occupied corners in  tree-like tableaux and symmetric tree-like tableaux  via   intermediate structures of   alternative tableaux,  linked partitions, type $B$ alternative tableaux and type $B$ linked partitions.
\end{abstract}

\section{Introduction}
\label{sec:in}

This paper is concerned with two conjectures of  \cite{Gao} on the  refined  enumeration of tree-like tableaux and symmetric tree-like tableaux with respect to number of non-occupied corners.
Tree-like tableaux are certain fillings of Ferrers diagrams originally introduced by   \cite{aval},  which are in simple bijections with   permutation tableaux  coming from  Postnikov's study of totally nonnegative Grassmanian in \cite{Pos}  and  alternative tableaux  introduced by    \cite{Vi}. These three equivalent combinatorial objects are closely related to a statistical physics model called partially asymmetric exclusion process (PASEP); see \cite{Cor0, Cor3, Cor4}.

  A  {\em Ferrers diagram } is the
left-justified arrangement of square cells with possibly empty rows and columns. The {\em size
 } of  a Ferrers diagram is  the number of rows plus the number of columns. Given a Ferrers diagram of  size $n$, we label the steps in the south-east border with $1,2,\ldots, n$ from north-east to south-west. A row (resp. column) is labeled with $i$ if the row (resp. column ) contains the south (resp. west) step labeled with $i$. Notice that we may place a row label to the left of the first column and place a column label at the top of the first row; see Figure \ref{diagram}.
  A row (resp. column) labeled with $i$ is called row (resp. column) $i$.
    The steps of the south-east border are called {\em border edges}.
  The cell $(i,j)$ is the cell in row $i$ and column $j$ unless otherwise stated.

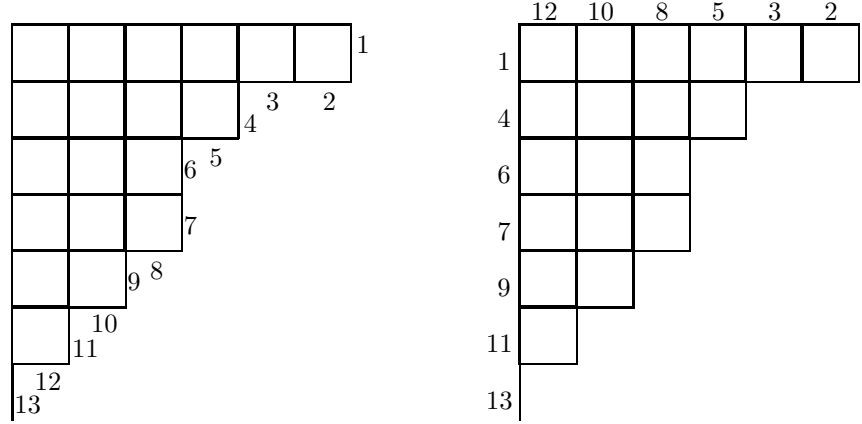
\begin{figure}[h!]
\begin{center}
\begin{picture}(200,200)
\setlength{\unitlength}{1.5mm}
\put(0,0){\line(0,1){5} }
\put(0,5){\framebox(5,5) }
\put(0,10){\framebox(5,5)   } \put(5,10){\framebox(5,5)  }
\put(0,15){\framebox(5,5)   } \put(5,15){\framebox(5,5)  }\put(10,15){\framebox(5,5)  }

\put(0,20){\framebox(5,5)   } \put(5,20){\framebox(5,5)  }\put(10,20){\framebox(5,5)  }

\put(0,25){\framebox(5,5)   } \put(5,25){\framebox(5,5)  }
\put(10,25){\framebox(5,5)  }\put(15,25){\framebox(5,5)  }

\put(0,30){\framebox(5,5)   } \put(5,30){\framebox(5,5)  }
\put(10,30){\framebox(5,5)  }\put(15,30){\framebox(5,5)  }
\put(20,30){\framebox(5,5)  }\put(25,30){\framebox(5,5)  }

\put(30.5, 32.5){$1$}\put(27.5, 27.5){$2$}\put(22.5, 27.5){$3$}
\put(20.5, 25.5){$4$}\put(17.5, 22.5){$5$}\put(15.2, 21.5){$6$}
\put(15.2, 16.5){$7$}\put(12.2, 12.5){$8$}\put(10.2, 11.5){$9$}
\put(7, 7.8){$10$}\put(5.3, 5.6){$11$}\put(2, 2.6){$12$}\put(0.2, 0.6){$13$}

\put(45,0){\line(0,1){5} }
\put(45,5){\framebox(5,5) }
\put(45,10){\framebox(5,5)   } \put(50,10){\framebox(5,5)  }
\put(45,15){\framebox(5,5)   } \put(50,15){\framebox(5,5)  }\put(55,15){\framebox(5,5)  }

\put(45,20){\framebox(5,5)   } \put(50,20){\framebox(5,5)  }\put(55,20){\framebox(5,5)  }

\put(45,25){\framebox(5,5)   } \put(50,25){\framebox(5,5)  }
\put(55,25){\framebox(5,5)  }\put(60,25){\framebox(5,5)  }

\put(45,30){\framebox(5,5)   } \put(50,30){\framebox(5,5)  }
\put(55,30){\framebox(5,5)  }\put(60,30){\framebox(5,5)  }
\put(65,30){\framebox(5,5)  }\put(70,30){\framebox(5,5)  }

\put(42, 1){$13$}\put(42, 6){$11$}\put(43, 11){$9$}\put(43, 16){$7$}
\put(43, 21){$6$}\put(43, 26){$4$}\put(43, 31){$1$}
\put(46, 35.5){$12$}\put(51, 35.5){$10$}\put(57, 35.5){$8$}\put(62, 35.5){$5$}
\put(67, 35.5){$3$}\put(72, 35.5){$2$}
  \end{picture}
\end{center}
\caption{  The labelling of a Ferrers diagram.}\label{diagram}
\end{figure}

 A {\em tree-like  } tableau is a  filling of a Ferrers diagram without empty rows and columns with points inside some cells, such that the resulting diagram    satisfies the following conditions:
 \begin{itemize}
 \item[(1)]  the top left cell contains a point, called the root point;
 \item[(2)] for every non-root pointed cell $c$, there exists either a pointed cell above $c$ in the same column, or a pointed cell to its left in the same row, but not both;
     \item[(3)] every row and every column possess at least one pointed cell.
 \end{itemize}

The size of a  tree-like tableau is defined to be its number of points.
The left  subfigure of Figure \ref{tree} illustrates a tree-like tableau of size $11$.
It is well known that the size of a tree-like tableau is equal to  the size of its underlying diagram  minus  one.
Denote by $\mathcal{T}_n$ the set of tree-like tableaux of size $n$.
A {\em symmetric tree-like tableau} is a tree-like tableau unchanged by the reflection with respect to its main diagonal line; see the right   subfigure of Figure \ref{tree}.  Since  the size of a symmetric tree-like tableau is necessarily odd, we denote by $\mathcal{T}^{sym}_{2n+1}$ the set of symmetric tree-like tableaux of size $2n+1$.

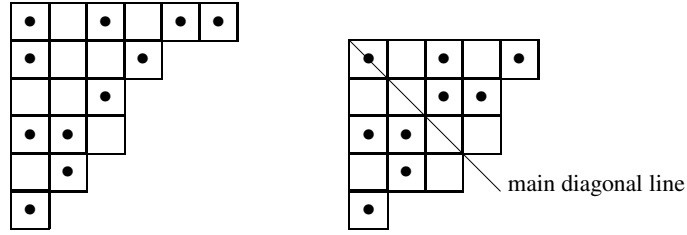
\begin{figure}[h!]
\begin{center}
\begin{picture}(150,100)
\setlength{\unitlength}{1mm}

\put(0,0){\framebox(5,5)  {$\bullet$} }
\put(0,5){\framebox(5,5)   } \put(5,5){\framebox(5,5)   {$\bullet$} }
\put(0,10){\framebox(5,5)    {$\bullet$} } \put(5,10){\framebox(5,5)    {$\bullet$} } \put(10,10){\framebox(5,5)  }

\put(0,15){\framebox(5,5)   } \put(5,15){\framebox(5,5)  }\put(10,15){\framebox(5,5)  {$\bullet$}  }

\put(0,20){\framebox(5,5)   {$\bullet$}  } \put(5,20){\framebox(5,5)  }
\put(10,20){\framebox(5,5)  }\put(15,20){\framebox(5,5) {$\bullet$}   }

\put(0,25){\framebox(5,5)  {$\bullet$}   } \put(5,25){\framebox(5,5)  }
\put(10,25){\framebox(5,5) {$\bullet$}   }\put(15,25){\framebox(5,5)  }
\put(20,25){\framebox(5,5) {$\bullet$}   }\put(25,25){\framebox(5,5)  {$\bullet$}  }

 \put(45,0){\framebox(5,5)  {$\bullet$} }
\put(45,5){\framebox(5,5)   } \put(50,5){\framebox(5,5)   {$\bullet$} }
\put(55,5){\framebox(5,5)     }

\put(45,10){\framebox(5,5)  {$\bullet$} }
\put(50,10){\framebox(5,5) {$\bullet$}  } \put(55,10){\framebox(5,5)   }
\put(60,10){\framebox(5,5)     }

\put(45,15){\framebox(5,5)   }
\put(50,15){\framebox(5,5)   } \put(55,15){\framebox(5,5) {$\bullet$}  }
\put(60,15){\framebox(5,5)    {$\bullet$} }

\put(45,20){\framebox(5,5) {$\bullet$}  }
\put(50,20){\framebox(5,5)   } \put(55,20){\framebox(5,5) {$\bullet$}  }
\put(60,20){\framebox(5,5)     }
\put(65,20){\framebox(5,5) {$\bullet$}  }

\put(45,25){\line(1,-1){20}} \put(66,5){\small main diagonal line}

  \end{picture}
\end{center}
\caption{  A tree-like tableau (left) and a symmetric tree-like tableau (right).}\label{tree}
\end{figure}

In a  Ferrers diagram, a {\em corner} is a cell such that its bottom and right edges are border edges. If the corner $c$ is not pointed in a tree-like tableau $T$, then we say that  $c$ is a {\em non-occupied } corner of $T$.  Recently,  \cite{Lab} noticed that the corners in tree-like tableaux  could be interpreted in the PASEP as the locations where a jump of particle is possible.  In \cite{Lab}, he also obtained the enumeration of occupied corners in  tree-like tableaux and symmetric tree-like tableaux and posed two conjectures concerning the total number of corners in   tree-like tableaux and symmetric tree like tableaux. These two conjectures were independently confirmed by    \cite{Gao},  and
 \cite{Hit}.

 Given a tree-like tableau $T$, the {\em weight} of $T$  is defined to be $a^{top(T)}b^{left(T)}$, where $top(T)$ is the number of non-root pointed cells in the topmost row, and  $left(T)$
 is the number of non-root pointed cells in the  leftmost column.
The  polynomial analogues  of the  number of  tree-like tableaux  of size $n$
and symmetric  tree-like tableaux  of size $2n+1$
are  defined as follows:
\begin{align*}
T_n(a,b)=\sum_{T\in \mathcal{T}_n}a^{top(T)}b^{left(T)},\\
T^{sym}_{2n+1}(a,b)=\sum_{T\in \mathcal{T}^{sym}_{2n+1}}x^{left(T)-1}.
\end{align*}
By introducing an elementary insertion procedure,   \cite{aval} showed that
\begin{equation}\label{eqT}
T_n(a,b)=\sum_{T\in \mathcal{T}_n}a^{top(T)}b^{left(T)}=(a+b)_{n-1},
\end{equation}
and

\begin{equation}\label{eqsT}
  T^{sym}_{2n+1}(x)=\sum_{T\in \mathcal{T}^{sym}_{2n+1}}x^{left(T)-1}=2^n(x+1)_{n-1},
  \end{equation}
where  $(x)_n$ denotes the rising factorial, that is,  $(x)_0=1$ and $(x)_n=x(x+1)\ldots (x+n-1)$ for $n\geq 1$.
In order to refine the enumeration of  non-occuppied corners in tree-like tableaux and symmetric tree like tableaux,  \cite{Gao} posed the following two conjectures.

\begin{conjecture}{ \upshape   (See \cite{Gao}, Conjecture 4.10)}\label{con1}
For  $n\geq 3$, the $(a,b)$-analogue of the number of non-occupied corners in tree-like tableaux of size $n$ is given by
$$
noc_n(a,b)=\sum_{T\in \mathcal{T}_n}noc(T)a^{top(T)}b^{left(T)}=\big( (n-2)ab+{n-2\choose 2}(a+b)+{n-2\choose 3}\big)\cdot  T_{n-2}(a,b)
$$
 where $noc(T)$ is the number of non-occupied corners of $T$.
\end{conjecture}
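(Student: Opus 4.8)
The plan is to prove the identity by a weight-preserving bijection that peels off one non-occupied corner at a time, trading a pair $(T,c)$ with $T\in\mathcal{T}_n$ and $c$ a non-occupied corner for a tree-like tableau $T'$ of size $n-2$ together with a bounded amount of local \emph{reinsertion data}. The factorisation $noc_n(a,b)=P_n(a,b)\,T_{n-2}(a,b)$ with $P_n(a,b)=\binom{n-2}{1}ab+\binom{n-2}{2}(a+b)+\binom{n-2}{3}$ is the signature of such a reduction: summing $a^{top(T')}b^{left(T')}$ over $T'\in\mathcal{T}_{n-2}$ already produces $T_{n-2}(a,b)=(a+b)_{n-3}$ by \eqref{eqT}, so it suffices to show that \emph{every} $T'$ admits the same weighted collection of reinsertions, of total weight $P_n(a,b)$. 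As a first sanity check, setting $a=b=1$ gives $P_n(1,1)=(n-2)^2+\binom{n-2}{3}=\binom{n}{3}$, so each $T'$ should admit exactly $\binom{n}{3}$ reinsertions; since a size-$(n-2)$ tableau has a border of length $n-1$, this count already exceeds the number of places to splice in a new row--column pair, signalling that a reinsertion must also record where the forced points of the new row and new column sit, i.e.\ the content is genuinely at the level of fillings and not merely shapes.

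First I would record the local picture of a non-occupied corner. Reading the south-east border of $T$ as a word in row-edges and column-edges, a corner is a row-edge immediately followed (from north-east to south-west) by a column-edge, and being non-occupied forces all points of that row to lie strictly west of the corner column and all points of that column strictly north of the corner row. Deleting the corner's row and its column then yields a Ferrers diagram of size $n-1$, and the heart of the argument is to show that, after a canonical reattachment of the displaced points dictated by the tree structure, one obtains a genuine tree-like tableau $T'\in\mathcal{T}_{n-2}$. Running this in reverse, a reinsertion consists of the splice position for the new row-edge/column-edge pair \emph{together with} the placements of the forced points in the new row and column; these split into three types according to their effect on the distinguished first row and first column --- creating a non-root point in both the top row and the left column (weight $ab$), in exactly one of them (weight $a$, respectively $b$), or in neither (weight $1$). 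I would carry out this bookkeeping inside the intermediate models of alternative tableaux and linked partitions, where the first-row/first-column incidences and the emptiness of the corner become local, position-independent conditions, making it transparent that the three types occur in $\binom{n-2}{1}$, $\binom{n-2}{2}$ (for each of $a$ and $b$), and $\binom{n-2}{3}$ ways, and that these totals do not depend on $T'$.

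Assembling the pieces, for a fixed $T'$ the reinsertions contribute $\binom{n-2}{1}ab+\binom{n-2}{2}(a+b)+\binom{n-2}{3}$ times the weight $a^{top(T')}b^{left(T')}$, and summing over $\mathcal{T}_{n-2}$ gives $P_n(a,b)\,T_{n-2}(a,b)$, which is the claim; note $\binom{n-2}{1}+2\binom{n-2}{2}+\binom{n-2}{3}=\binom{n}{3}$ recovers the $a=b=1$ check above. The main obstacle is establishing that this really is a bijection rather than a heuristic. I must verify that corner deletion lands in $\mathcal{T}_{n-2}$ (no row or column is emptied of points, and condition (2) survives the reattachment), that the reinsertion fibres have the asserted weighted sizes \emph{uniformly} in $T'$ --- this independence is exactly what forces the clean product and is the least obvious point --- and that $noc$, $top$ and $left$ transport faithfully across the bijections to alternative tableaux and linked partitions. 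I expect the delicate cases to be the corners meeting the first row or first column, where the root point and the definitions of $top$ and $left$ interact, together with the degenerate small shapes, which is why the hypothesis $n\geq 3$ appears. A purely generating-function alternative --- tracking how a single insertion step of the Aval et al.\ procedure creates, destroys, or occupies corners and then solving the resulting recurrence for $noc_n$ --- is also available, but that recurrence couples the corner statistic with $top$ and $left$ and seems harder to close than the bijective route through the intermediate structures.
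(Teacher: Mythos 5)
There is a genuine gap, and it sits exactly where you flag it yourself: the claim that \emph{every} $T'\in\mathcal{T}_{n-2}$ admits a reinsertion family of total weight $(n-2)ab+\binom{n-2}{2}(a+b)+\binom{n-2}{3}$, with your three incidence types occurring in exactly $\binom{n-2}{1}$, $\binom{n-2}{2}$ and $\binom{n-2}{3}$ ways \emph{uniformly in $T'$}, is asserted as ``transparent'' in the linked-partition model but never derived, and in this literal form it does not match how the fibres actually behave. What is uniform in the linked-partition model is the count \emph{at a fixed corner position}: in the paper's Lemma \ref{lemweightp}, a non-occupied corner at cell $(i-1,i)$ translates to conditions (a2)--(a4) on $\tau$, and deleting vertex $i$ reduces the relaxed set $\mathcal{M}_{n,i}$ (conditions (a2) and (a4) only) to $\mathcal{L}_{n-1}$ --- size $n-1$, not $n-2$ --- with weighted fibre $(i-3+a)$, a quantity that depends on $i$ (see (\ref{tau2})). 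The binomial coefficients in the conjecture are not fibre sizes at all: they arise only after summing over the corner position, e.g.\ $\sum_{i=3}^{n}(i-3)=\binom{n-2}{2}$ in (\ref{tau4}) and $\sum_{i=3}^{n}(i-3)^2=(n-3)(n-2)(2n-5)/6$ in (\ref{tau7}). Moreover, your narrative omits the negative condition entirely: enforcing that the cell is a genuine corner requires that vertex $i-1$ be \emph{not} a destination (condition (a3)), which the paper handles by the inclusion--exclusion $(\ref{tau1})$, subtracting the set $\mathcal{N}_{n,i}$ (both $i-1$ and $i$ destinations), whose reduction lands in $\mathcal{L}_{n-2}$ with weight $(i-3+a)^2$. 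The clean factor $T_{n-2}(a,b)$ then emerges only after applying the recurrence $L_{n-1}(a,b)=(a+b+n-3)L_{n-2}(a,b)$, an algebraic consolidation of the mixed-size reductions --- not a fibre-uniform two-step removal. Your sanity check $P_n(1,1)=\binom{n}{3}$ is correct and the predicted factorisation is right, but a correct aggregate cannot certify the per-$T'$ uniformity you build the whole argument on.

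The tableau-level half of your plan is also underdetermined in a way that matters. Removing the corner's row and column from $T\in\mathcal{T}_n$ deletes at least two points (the row and column of a non-occupied corner each contain a point, in distinct cells), may orphan points whose unique pointed neighbour required by condition (2) lay in the deleted row or column, and may empty other rows or columns; your ``canonical reattachment dictated by the tree structure'' is never specified, and without it neither well-definedness nor injectivity can be checked. This is precisely why the paper does not attempt a direct deletion on tree-like tableaux, but first transports everything through $\beta$ and $\Phi$ to linked partitions, where deleting a destination vertex together with its unique incoming arc is automatically well defined. Finally, the boundary case you wave at with ``degenerate small shapes'' has real content: the corner $(1,2)$ can never be non-occupied because in $\mathcal{AT}^{*}_n$ column $2$'s mandatory $\uparrow$ must occupy it (the paper's observation $\mathcal{AT}^{*}_{n,2}=\emptyset$), and this exclusion is needed to start the sum at $i=3$, where the weight $(i-3+a)$ is nonnegative. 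In short: right intermediate structures, right predicted shape, but the central counting lemma is conjectured rather than proved, and its honest proof requires position-dependent fibres, an inclusion--exclusion for the corner condition, and the recurrence for $L_n(a,b)$ --- none of which appear in the proposal.
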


 \begin{conjecture}{ \upshape   (See \cite{Gao}, Conjecture 4.14)}\label{con2}
 For $n\geq 3$, the $x$-analogue of the number of non-occupied corners in symmetric tree-like tableaux of size $2n+1$ is given by
$$
\sum_{T\in \mathcal{T}^{sym}_{2n+1}}noc(T)x^{left(T)-1}=\big[2nx^2+2(2n^2-4n+1)x+{(n-2)(n-1)(4n-3)\over 3}\big]\cdot T^{sym}_{2n-3}(x).
$$
 \end{conjecture}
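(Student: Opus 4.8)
The plan is to move the entire problem into the setting of type $B$ linked partitions, where both the weight $x^{left(T)-1}$ and the non-occupied corners admit clean combinatorial descriptions, and then to count the relevant configurations directly. First I would fix the bijective chain advertised in the abstract, sending a symmetric tree-like tableau of size $2n+1$ to a type $B$ alternative tableau and then to a type $B$ linked partition on the signed ground set $\{\pm 1,\dots,\pm n\}$ (with a possible zero-block). The initial bookkeeping is to check that under this chain the statistic $left(T)-1$ is carried to a natural arc/singleton statistic, so that $\sum_{T} x^{left(T)-1}$ reproduces the known product $2^{n}(x+1)_{n-1}$ of \eqref{eqsT} on the type $B$ side, and, crucially, to identify exactly which local patterns of a type $B$ linked partition correspond to non-occupied corners of $T$.

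Second, I would build a corner-deletion map. In the type $B$ picture a non-occupied corner sits at a distinguished pair of ground-set elements, and deleting that pair lowers the parameter from $n$ to $n-2$; translated back, this sends $\mathcal{T}^{sym}_{2n+1}$ to $\mathcal{T}^{sym}_{2n-3}$, which is precisely the size drop that produces the advertised factor $T^{sym}_{2n-3}(x)$. The content of this step is to make the map reversible by recording the reinsertion data, and to prove the key structural fact that the set of reinsertion data, together with its $x$-weight, is the same for every target tableau in $\mathcal{T}^{sym}_{2n-3}$. This independence is what lets the sum factor as $[\text{bracket}]\cdot T^{sym}_{2n-3}(x)$; I expect to secure it either directly from Aval et al.'s insertion procedure or, failing that, by an induction on $n$ that verifies the claimed closed form against a one-step insertion recurrence.

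Third comes the classification, which parallels the three-term split $(n-2)ab+\binom{n-2}{2}(a+b)+\binom{n-2}{3}$ of the type $A$ statement in Conjecture~\ref{con1}. Reinserting a corner changes $left(T)$ by $0$, $1$, or $2$, according to how the reinserted pair meets the leftmost column and (by symmetry) the topmost row; these three cases contribute the monomials $x^{0}$, $x^{1}$, and $x^{2}$. I would then count the admissible reinsertions in each case as a polynomial in $n$, obtaining $2n$ placements of weight $x^{2}$, $2(2n^{2}-4n+1)$ placements of weight $x$, and $\tfrac{(n-2)(n-1)(4n-3)}{3}$ placements of weight $x^{0}$; assembling these and factoring out $T^{sym}_{2n-3}(x)$ yields the claimed identity.

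The main obstacle, and where I would concentrate the care, is the genuinely type $B$ feature: the self-paired elements and the zero-block, which correspond to corners meeting the main diagonal of the self-conjugate shape. For such corners the naive row-and-column deletion removes only two border edges rather than four, so both the size accounting and the $x$-weight behave differently, and it is exactly these configurations that must produce the anomalous quadratic coefficient $2n$ (only twice, rather than eight times, the type $A$ value $n-2$ that governs the other two families). I would isolate the diagonal case through the zero-block of the type $B$ linked partition, count it separately, and cross-check the three families against the small instances $n=3,4$ before combining them into the bracketed polynomial.
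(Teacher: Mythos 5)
Your setup (the chain $\mathcal{T}^{sym}_{2n+1}\to\mathcal{AT}^{sym}_{2n}\to\mathcal{AT}^{B}_{n}\to\mathcal{L}^{B}_{n}$, with $left(T)-1$ carried to $os(\tau)-1$ and the $x$-series reproducing $2^n(x+1)_{n-1}$) matches the paper, as does the observation that diagonal and off-diagonal corners must be weighted differently (the paper's Theorem \ref{thegamma}: $noc=noc'+2noc''$). But your central step --- a corner-deletion map to $\mathcal{T}^{sym}_{2n-3}$ whose reinsertion data is independent of the target and splits into three families with $\Delta left\in\{0,1,2\}$ of sizes exactly $2n$, $2(2n^2-4n+1)$ and $\tfrac{(n-2)(n-1)(4n-3)}{3}$ --- is asserted, not proved, and it is precisely where the difficulty lives. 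Worse, the actual structure revealed by the paper's proof obstructs your uniform ``delete the pair, drop size by $2$'' picture: a non-occupied corner at cell $(i-1,i)$ of a type $B$ alternative tableau comes in two structurally different kinds, according to whether column $i$ contains an $\uparrow$ (vertex $i$ is a legal destination) or not (vertex $-i$ is present). For the second kind there is no two-vertex deletion at all; the paper evaluates that family as $\sum_{\tau\in\mathcal{X}_{n,i}}x^{os(\tau)-1}=2^{n-2}L_n(x)-(i-1)2^{n-2}L_{n-1}(x)$, a \emph{signed} combination still involving the full-size $L_n(x)$. Likewise the emptiness of the corner and the condition that row $i-1$ be present translate into negative conditions (no arc $(i-1,i)$ or $(-i,i-1)$; vertex $i-1$ not a legal destination) that the paper handles by inclusion--exclusion, not by a positive classification of reinsertions. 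The bracket's three nonnegative coefficients only emerge after summing over all corner positions $i$ and substituting $L_n(x)=(x+n-1)(x+n-2)L_{n-2}(x)$; so the ``independence of reinsertion data'' you hope to extract from Aval et al.'s insertion procedure is, as far as anyone has shown, a statement about aggregate generating functions, and your proposal supplies no mechanism for upgrading it to the bijective form your plan requires.

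Two further concrete misidentifications. First, the paper's type $B$ linked partitions have no zero-block and no self-paired elements: they are linked partitions of $[n]$ in which each $i$ appears as $i$ or $-i$ (never both), and the diagonal corner $(-1,1)$ corresponds simply to $-1$ being a vertex label, with weight sum $2^{n-1}L_n(x)$ obtained by a sign-choice count --- no deletion, no zero-block. Second, your attribution of the quadratic coefficient $2n$ to the diagonal family is wrong: unwinding the paper's computation, the diagonal corners contribute only $2$ to the $x^2$-coefficient, while the remaining $2(n-1)$ comes from off-diagonal corners whose column lacks an up arrow (the $\mathcal{X}$-family, whose weight involves $L_n$ rather than $L_{n-2}$). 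So the case you single out as the source of the ``anomaly'' is in fact the smallest contributor to it, and the family you would treat by naive pair-deletion is the one for which pair-deletion is unavailable. To make your program work you would need, at minimum, correct local characterizations of the two corner types in $\mathcal{L}^{B}_{n}$ (conditions (b2)--(b4) of the paper) and a replacement for the inclusion--exclusion steps; as written, the proposal has a genuine gap at its key lemma.
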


To confirm  Conjecture \ref{con1}, we establish a bijection  between tree-like tableaux of size $n$ and alternative tableaux of size $n$ in which each column contains an up arrow, and    a bijection between  alternative tableaux of size $n$ in which each column contains an up arrow and linked partitions of $[n]$. The notion of linked partitions was introduced by  \cite{Dy}  in the study of the unsymmetrized T-transform in free probability theory.  The combination of these two bijections enables us to compute the left-hand side of Conjecture \ref{con1} in terms of linked partitions.

In order to verify Conjecture \ref{con2}, we introduce the notions of type $B$ alternative tableaux and  type $B$ linked partitions. We establish a bijection between symmetric alternative tableaux of size $2n$ and type $B$ alternative tableaux of  size $n$, and      a bijection between type $B$ alternative tableaux of  size $n$ and  type $B$ linked partitions of $[n]$.  By combining    these two bijections and  the bijection between symmetric tree-like tableaux of size $2n+1$ and symmetric alternative tableaux of size $2n$ given by    \cite{aval}, we can    compute the left-hand side of Conjecture \ref{con2} in terms of type $B$ linked partitions.

\section{The bijection between tree-like tableaux  and alternative tableaux}
\label{sec:bi}
In this section,  we give an overview  of the bijection $\alpha$
between tree-like tableaux of size $n$ and alternative tableaux of size
$n-1$ which was established by   \cite{aval}.
Analogous to the bijection $\alpha$, we establish a bijection between
 tree-like tableaux of size $n$ and alternative tableaux of size
$n$ in which each column contains an up arrow.
To prove Conjecture \ref{con2}, we introduce the notion of
type $B$ alternative tableaux and construct a bijection between
type $B$ alternative tableaux of size $n$
and symmetric alternative tableaux of size $2n$

An {\em alternative } tableau is a     Ferrers diagram with a partial filling of the cells with left arrows $\leftarrow$ and up arrows $\uparrow$, such that all the cells to the left of  a left arrow  $\leftarrow$, or above an up arrow  $\uparrow$ are empty. In other words, all the cells pointed by an arrow must be empty.  Alternative tableaux were first introduced by   \cite{Vi} and systematically studied by \cite{Nad}.
  The size of an alternative tableau is defined to be the size
  of its underlying diagram.
  We denote by $\mathcal{AT}_n$ the set of alternative tableaux of size $n$.
   In an alternative tableau $T$, a row is said to be {\em unrestricted} if it has no $\leftarrow's$.
  A corner $c$ is said to be {\em non-occupied} if the cell $c$ is empty. Denote by $urr(T)$ and $noc(T)$ the number of unrestricted rows and the number of non-occupied corners of $T$, respectively.

   A {\em symmetric } alternative tableau is an alternative  tableau unchanged by the reflection with respect to its main diagonal line.
 It is obvious that the size of a symmetric alternative tableau is even.
   Denote by  $\mathcal{AT}^{sym}_{2n}$ the set of symmetric alternative tableaux of size $2n$.

Recall that $\mathcal{T}_n$ denotes the set of tree-like tableaux of size $n$. In \cite{aval}, they established a  bijection $\alpha$ between the set $\mathcal{T}_n$ and the set $\mathcal{AT}_{n-1}$.
In the following, we give  a description of the bijection $\alpha$
and its inverse $\alpha^{-1}$ without proof, see \cite{aval}
for more details.

Given a tree-like tableau $T\in \mathcal{T}_n$ with underlying diagram $F$, we first replace each non-root point $p$  with an $\uparrow$ if there is no point above $p$ in the same column and with a $\leftarrow$ if there is no point to the left of $p$ in the same row. Then, we remove the topmost row and the leftmost column from $F$. Let $\alpha(T)$ be the resulting filling.
For example, Figure \ref{alternative} illustrates  a tree-like tableau $T$ and its corresponding alternative tableau $\alpha(T)$.
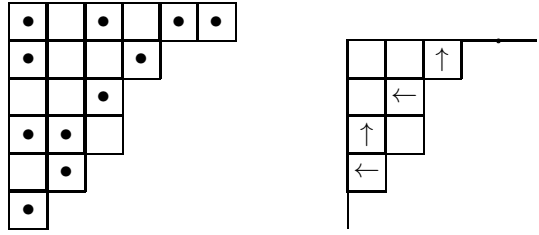
\begin{figure}[h!]
\begin{center}
\begin{picture}(100,100)
\setlength{\unitlength}{1mm}

\put(0,0){\framebox(5,5)  {$\bullet$} }
\put(0,5){\framebox(5,5)   } \put(5,5){\framebox(5,5)   {$\bullet$} }
\put(0,10){\framebox(5,5)    {$\bullet$} } \put(5,10){\framebox(5,5)    {$\bullet$} } \put(10,10){\framebox(5,5)  }

\put(0,15){\framebox(5,5)   } \put(5,15){\framebox(5,5)  }\put(10,15){\framebox(5,5)  {$\bullet$}  }

\put(0,20){\framebox(5,5)   {$\bullet$}  } \put(5,20){\framebox(5,5)  }
\put(10,20){\framebox(5,5)  }\put(15,20){\framebox(5,5) {$\bullet$}   }

\put(0,25){\framebox(5,5)  {$\bullet$}   } \put(5,25){\framebox(5,5)  }
\put(10,25){\framebox(5,5) {$\bullet$}   }\put(15,25){\framebox(5,5)  }
\put(20,25){\framebox(5,5) {$\bullet$}   }\put(25,25){\framebox(5,5)  {$\bullet$}  }

\put(45,0){\line(0,1){5}}
\put(45,5){\framebox(5,5)  {$\leftarrow$} }
\put(45,10){\framebox(5,5)  {$\uparrow$} }
 \put(50,10){\framebox(5,5)    }

 \put(45,15){\framebox(5,5)    }
 \put(50,15){\framebox(5,5)  {$\leftarrow$}  }

 \put(45,20){\framebox(5,5)    }

 \put(50,20){\framebox(5,5)    }
 \put(55,20){\framebox(5,5)  {$\uparrow$}  }

 \put(60,25){\line(1,0){5}}\put(65,25){\line(1,0){5}}
 \put(65,25){\circle*{0.5}}
  \end{picture}
\end{center}
\caption{  A tree-like tableau $T$ (left) and  its corresponding alternative tableaux  tableau  $\alpha(T)$ (right).}\label{alternative}
\end{figure}

The inverse map $\alpha^{-1}$ is defined as follows.  Let $T'$   be an alternative tableau of size  $n-1$ with underlying diagram $F'$.  Suppose that $F'$ has $k$ rows and $n-1-k$ columns.  We first construct the underlying diagram $F$ of  $\alpha^{-1}(T')$ from $F'$ by adding a column of $k$ cells to the left of the leftmost column and adding a row of $n-1-k$ cells above its topmost row and a cell at its top left corner. It is easily seen that $F$ is of size $n+1$.
Next,   we fill the leftmost cell in  row $i$ of $F$ with a point if and only if   row $i-1$ of $F'$ has no $\leftarrow's$.
 Analogously,    we fill  the topmost  cell in column $j$ of $F$
 with a point if and only if   column $j-1$ of $F'$ has no $\uparrow's$.
 Finally, we  fill each cell $(i+1,j+1)$ with a point if and only if the cell $(i,j)$  is filled with an arrow in $F'$
 and  fill a point in the top left cell of $F$.
 Let $\alpha^{-1}(T')$ be the resulting tableau.

From the definition of the bijection $\alpha$, it is not difficult to see that $\alpha$ maps a non-root point in the first column  of $T$ to an unrestricted row of $T'$. Moreover, the bijection $\alpha$ preserves the number of non-occupied corners since every row and every column of $T$ possess at least one pointed cell.
 Hence, we have the following properties of the bijection $\alpha$.

\begin{theorem}\label{thealpha}
Let $n\geq 1$. For any $T\in \mathcal{T}_n$ and $T'\in \mathcal{AT}_{n-1}$ with $\alpha(T)=T'$, we have  $left(T)=urr(T')$ and $noc(T)=noc(T')$.
\end{theorem}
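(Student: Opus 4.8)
The plan is to establish the two equalities separately, reading each off the explicit description of $\alpha$ and its inverse rather than building new machinery. For $left(T)=urr(T')$ I would argue from the construction of $T=\alpha^{-1}(T')$: the leftmost column of $T$ is created by adjoining one fresh cell to the left of every row of $T'$ (together with the root cell), and by definition this fresh cell is filled with a point precisely when the corresponding row of $T'$ carries no $\leftarrow$, that is, is unrestricted. Hence the non-root pointed cells of the leftmost column of $T$ are in bijection with the unrestricted rows of $T'$, which is the assertion $left(T)=urr(T')$. Running $\alpha$ forward gives the same conclusion: a non-root point in the leftmost column has no cell to its left and so must be replaced by a $\leftarrow$, and since only the leftmost point of a row is assigned a $\leftarrow$, it is the unique $\leftarrow$ in its row; deleting the leftmost column therefore removes exactly this arrow and leaves the corresponding row of $T'$ unrestricted.

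For $noc(T)=noc(T')$ I would classify the corners of the underlying diagram of $T$ according to whether or not they lie in the topmost row or the leftmost column. A corner $(i,j)$ with $i,j\ge 2$ has its right and lower neighbours indexed by $\ge 2$, so these neighbours are neither removed nor created when the topmost row and leftmost column are deleted; thus $(i,j)$ is a corner of $T$ if and only if its image is a corner of $T'$, and it is occupied in $T$ (holds a point) exactly when its image holds an arrow in $T'$. The non-occupied corners among such cells therefore match up. The remaining corners lie in the topmost row or the leftmost column: a corner in the leftmost column is the only cell of a length-one row, and a corner in the topmost row is the only cell of a height-one column, so condition (3) forces a point there. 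Such corners are always occupied, contribute nothing to $noc(T)$, and disappear under $\alpha$ without affecting $noc(T')$. Combining the two cases yields $noc(T)=noc(T')$.

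The corner count is mostly bookkeeping, but the step I expect to be the main obstacle is checking that deleting the topmost row and leftmost column neither destroys an existing non-occupied corner nor manufactures a new one. The delicate point is that removing the leftmost column can annihilate entire length-one rows, and removing the topmost row can annihilate entire height-one columns, so one must verify that these collapses only ever delete corners that were already occupied and never shift the boundary so as to expose a fresh non-occupied corner among the interior cells. This is exactly where condition (3) does the work: because every row and every column of $T$ contains a point, each corner lying on the deleted row or column is forced to be occupied, and the cell to the right of, and the cell below, any surviving interior corner cannot belong to a deleted row or column. Once this is in hand, the point/arrow dictionary supplied by $\alpha$ makes the equality of the two corner counts immediate.
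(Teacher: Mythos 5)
Your proof is correct and takes essentially the same route as the paper, which states Theorem~\ref{thealpha} with only a sketch: the correspondence between non-root points in the leftmost column of $T$ and rows of $T'$ without $\leftarrow$'s, read off directly from $\alpha$ and $\alpha^{-1}$, and the observation that corners lying in the deleted topmost row (height-one columns) or leftmost column (length-one rows) are forced to be occupied by condition (3), so $\alpha$ preserves non-occupied corners. Your write-up merely supplies the bookkeeping (the interior-corner matching and the point/arrow dictionary) that the paper leaves implicit.
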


 \begin{theorem}\label{thesym}
 Let $n\geq 1$. The bijection $\alpha$ restricted to the set of symmetric tree-like tableaux of size $2n+1$ induces a bijection between the set   $\mathcal{T}^{sym}_{2n+1}$ and the set  $\mathcal{AT}^{sym}_{2n}$. Furthermore,
for any $T\in \mathcal{T}^{sym}_{2n+1}$ and $T'\in \mathcal{AT}^{sym}_{2n}$ with $\alpha(T)=T'$, we have
$left(T)=urr(T')$ and
    $noc(T)=noc(T')$.
   \end{theorem}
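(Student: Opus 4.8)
The plan is to reduce everything to a single commutation relation between $\alpha$ and the diagonal reflection, after which the statistic identities come for free. First observe that the ``Furthermore'' part requires no new work: since $\mathcal{T}^{sym}_{2n+1}\subseteq \mathcal{T}_{2n+1}$ and $\mathcal{AT}^{sym}_{2n}\subseteq \mathcal{AT}_{2n}=\mathcal{AT}_{(2n+1)-1}$, any pair $T,T'$ with $\alpha(T)=T'$ already satisfies $left(T)=urr(T')$ and $noc(T)=noc(T')$ by Theorem \ref{thealpha} applied with size $2n+1$. Hence the real content is that $\alpha$ carries $\mathcal{T}^{sym}_{2n+1}$ onto $\mathcal{AT}^{sym}_{2n}$ bijectively.

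To prove this, let $\rho$ denote the reflection across the main diagonal, acting both on tree-like tableaux (where it fixes the root, sends a point in cell $(i,j)$ to a point in cell $(j,i)$, and conjugates the underlying Ferrers diagram) and on alternative tableaux (where it additionally swaps each $\uparrow$ with a $\leftarrow$, since an up arrow becomes a left arrow after reflection). By definition $T$ is symmetric exactly when $\rho(T)=T$, and likewise for alternative tableaux. I would then establish the key identity
$$\alpha\circ\rho=\rho\circ\alpha,$$
where the $\rho$ on the left acts on tree-like tableaux and the one on the right on alternative tableaux. Granting this, bijectivity follows formally: if $\rho(T)=T$ then $\rho(\alpha(T))=\alpha(\rho(T))=\alpha(T)$, so $\alpha(T)$ is symmetric; conversely, if $\alpha(T)$ is symmetric then $\alpha(\rho(T))=\rho(\alpha(T))=\alpha(T)$, and injectivity of $\alpha$ forces $\rho(T)=T$. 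Thus restricting the known bijection $\alpha\colon\mathcal{T}_{2n+1}\to\mathcal{AT}_{2n}$ to the fixed points of $\rho$ yields a bijection $\mathcal{T}^{sym}_{2n+1}\to\mathcal{AT}^{sym}_{2n}$, the same commutation applied to $\alpha^{-1}$ giving surjectivity.

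It remains to verify $\alpha\circ\rho=\rho\circ\alpha$, which is where the actual checking lies and which I expect to be the main obstacle, although it is a routine if careful case analysis following the two-step description of $\alpha$. In the first step, a non-root point $p$ in cell $(i,j)$ becomes $\uparrow$ precisely when it has no point above it in column $j$ (equivalently, by condition (2), when it has a point to its left) and becomes $\leftarrow$ precisely when it has no point to its left in row $i$. Under $\rho$ the point moves to cell $(j,i)$ and ``having a point above in the same column'' is exchanged with ``having a point to the left in the same row'', so the arrow assigned to the reflected point is exactly the $\rho$-image (with $\uparrow\leftrightarrow\leftarrow$) of the arrow assigned to $p$. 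In the second step $\alpha$ deletes the topmost row and the leftmost column; reflection interchanges these two lines, so deleting both commutes with $\rho$, and the root, lying in the top-left cell on the diagonal, is removed in either order. Combining the two steps gives the identity. A final size check closes the argument: $\alpha$ lowers size by one, a symmetric tree-like tableau has odd size $2n+1$, and its image therefore has even size $2n$, matching $\mathcal{AT}^{sym}_{2n}$.
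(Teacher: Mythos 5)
Your proof is correct and complete. The paper itself gives no proof of this statement---it is asserted as an immediate property of $\alpha$, with the symmetric restriction attributed to Aval et al.---and your two ingredients, namely the reduction of $left(T)=urr(T')$ and $noc(T)=noc(T')$ to Theorem \ref{thealpha} applied at size $2n+1$, and the equivariance $\alpha\circ\rho=\rho\circ\alpha$ (with $\rho$ the diagonal reflection swapping $\uparrow$ and $\leftarrow$ on alternative tableaux) together with the fixed-point argument for both directions, are exactly the standard formalization of the omitted argument, with the case analysis via condition (2) of tree-like tableaux carried out correctly.
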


In order to prove Conjecture \ref{con1}, we need to consider a subset of $T\in \mathcal{AT}_n$, denoted by $\mathcal{AT}^{*}_n$,  in which each column of $T$ contains an $\uparrow$.
  Analogous to the bijection $\alpha$, we establish  a bijection $\beta$ between the set $\mathcal{T}_n$ to the set $\mathcal{AT}^{*}_{n}$ without proof.

 Given a tree-like tableau $T$ of size $n$ with underlying diagram $F$, we construct an alternative tableau $T'=\beta(T)$ as follows. First replace each non-root point $p$  with an $\uparrow$ if there is no point above $p$ in the same column and with a $\leftarrow$ if there is no point to the left of $p$. Then,
 $T'$ is the resulting tableau by removing the leftmost column from $F$.
  For example, Figure \ref{alternative1} illustrates  a tree-like tableau $T$ and its corresponding alternative tableau $\beta(T)$.

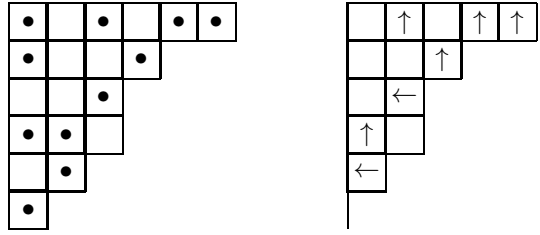
\begin{figure}[h!]
\begin{center}
\begin{picture}(100,100)
\setlength{\unitlength}{1mm}

\put(0,0){\framebox(5,5)  {$\bullet$} }
\put(0,5){\framebox(5,5)   } \put(5,5){\framebox(5,5)   {$\bullet$} }
\put(0,10){\framebox(5,5)    {$\bullet$} } \put(5,10){\framebox(5,5)    {$\bullet$} } \put(10,10){\framebox(5,5)  }

\put(0,15){\framebox(5,5)   } \put(5,15){\framebox(5,5)  }\put(10,15){\framebox(5,5)  {$\bullet$}  }

\put(0,20){\framebox(5,5)   {$\bullet$}  } \put(5,20){\framebox(5,5)  }
\put(10,20){\framebox(5,5)  }\put(15,20){\framebox(5,5) {$\bullet$}   }

\put(0,25){\framebox(5,5)  {$\bullet$}   } \put(5,25){\framebox(5,5)  }
\put(10,25){\framebox(5,5) {$\bullet$}   }\put(15,25){\framebox(5,5)  }
\put(20,25){\framebox(5,5) {$\bullet$}   }\put(25,25){\framebox(5,5)  {$\bullet$}  }

\put(45,0){\line(0,1){5}}
\put(45,5){\framebox(5,5)  {$\leftarrow$} }
\put(45,10){\framebox(5,5)  {$\uparrow$} }
 \put(50,10){\framebox(5,5)    }

 \put(45,15){\framebox(5,5)    }
 \put(50,15){\framebox(5,5)  {$\leftarrow$}  }

 \put(45,20){\framebox(5,5)    }

 \put(50,20){\framebox(5,5)    }
 \put(55,20){\framebox(5,5)  {$\uparrow$}  }

 \put(45,25){\framebox(5,5)    }

 \put(50,25){\framebox(5,5)  {$\uparrow$}  }
 \put(55,25){\framebox(5,5)    }
  \put(60,25){\framebox(5,5)  {$\uparrow$}  }\put(65,25){\framebox(5,5)  {$\uparrow$}  }
  \end{picture}
\end{center}
\caption{  A tree-like tableau $T$ (left) and  its corresponding alternative tableau  $\beta(T)$ (right).}\label{alternative1}
\end{figure}
 The inverse map $\beta^{-1}$ is defined as follows.  Let $T'$   be an alternative tableau of size  $n$ with underlying diagram $F'$.  Suppose that $F'$ has $k$ rows and $n-k$ columns.  We first construct the underlying diagram $F$ of  $\beta^{-1}(T')$ from $F'$ by adding a column of $k$ cells to the left of the leftmost column. It is easily seen that $F$ is of size $n+1$. Next,    we fill the leftmost cell of row $i$ of $F$ with a point if and only if   row $i$ of $F'$ contains no $\leftarrow's$. Finally, fill the cell $(i,j)$ of $F$ with a point if and only if   the cell $(i,j)$ of $F'$ has an arrow.
  Let $\beta^{-1}(T')$ be the resulting tableau.

 By similar arguments as for the bijection $\alpha$, we can get the following properties of  $\beta$ analogous to $\alpha$.

\begin{theorem}\label{thebeta}
Let $n\geq 1$. For any $T\in \mathcal{T}_n$ and $T'\in \mathcal{AT}^{*}_{n}$ with $\beta(T)=T'$, we have the following relations.
\begin{itemize}
\item[(1)] $left(T)=urr(T')-1$;
    \item[(2)] $noc(T)=noc(T')$;
        \item[(3)] $top(T)=top(T')$, where $top(T')$ is the number of arrows in the topmost row of $T'$.
\end{itemize}

\end{theorem}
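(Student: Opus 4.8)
The plan is to imitate the reasoning already given for $\alpha$, tracking how the two operations defining $\beta$---replacing each non-root point by an arrow and then deleting the leftmost column---affect each of the three statistics. The fact underlying everything is that the defining condition of a tree-like tableau, namely that each non-root pointed cell has a pointed cell either above it or to its left but not both, forces each non-root point $p$ to receive exactly one arrow: $p$ becomes an $\uparrow$ precisely when it has no point above it (equivalently, it has a point to its left) and a $\leftarrow$ precisely when it has no point to its left (equivalently, it has a point above it). Two consequences will be used repeatedly: in every row the only possible $\leftarrow$ sits at the leftmost point of that row, and every non-root point of the topmost row becomes an $\uparrow$, since it can have no point above it.

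For (1) I would determine which rows of $T'=\beta(T)$ are unrestricted. A non-topmost row $i$ of $T'$ has no $\leftarrow$ exactly when the $\leftarrow$ carried by the leftmost point of row $i$ of $F$ is deleted together with the leftmost column, that is, exactly when that leftmost point lies in the leftmost column; equivalently, when row $i$ contains a non-root point in the leftmost column. Since distinct such points occupy distinct rows, they account for exactly $left(T)$ unrestricted rows (a row that shrinks to width zero is still counted, being empty and hence free of $\leftarrow$'s). To these one adds the topmost row, whose leftmost cell is the deleted root and all of whose non-root points are $\uparrow$'s, so that it too is unrestricted; no other row can be. Hence $urr(T')=left(T)+1$, which is the claim. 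The extra $+1$, absent from Theorem \ref{thealpha}, is precisely the effect of $\beta$ retaining the topmost row that $\alpha$ removes.

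Statement (3) then falls out of the same observations: $\beta$ keeps the topmost row, every non-root point there becomes an $\uparrow$, and deleting the leftmost column removes from that row only the root. Thus the arrows of the topmost row of $T'$ correspond bijectively to the non-root points of the topmost row of $T$, giving $top(T)=top(T')$.

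The main work is (2), and I expect it to be the only delicate point. Here I would exploit that, because $T$ has no empty rows, its leftmost column has full height, so deleting it leaves the height of every other column unchanged; consequently the left-shift $(i,j)\mapsto(i,j-1)$ is a bijection from the cells of $F$ in columns $\geq 2$ onto the cells of $F'$ that sends corners to corners and preserves whether a cell carries a point (equivalently, an arrow). This restricts to a bijection between the corners of $F$ in columns $\geq 2$ and the corners of $F'$ respecting the occupied/non-occupied distinction, so it suffices to check that the deleted leftmost column contributes no non-occupied corner. The only cell of that column whose bottom edge is a border edge is its bottom cell, which is a corner only when the bottom row of $F$ has width one; in that case the cell is the sole cell of its row and hence, by the condition that every row contains a point, is occupied. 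Therefore every non-occupied corner of $T$ lies in columns $\geq 2$, and the bijection yields $noc(T)=noc(T')$. This corner bookkeeping---in particular confirming that the unique possible corner of the deleted column is always occupied---is, exactly as for $\alpha$, governed entirely by the fact that every row of a tree-like tableau carries a point.
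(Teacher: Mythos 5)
Your proof is correct and follows exactly the route the paper intends: the paper omits the argument, stating only that it goes ``by similar arguments as for the bijection $\alpha$,'' whose own justification is the two observations you elaborate --- points in the leftmost column correspond to unrestricted rows (with your $+1$ correctly accounting for the retained topmost row, whose non-root points all become $\uparrow$'s), and the corner count is preserved because every row of a tree-like tableau contains a point, so the unique possible corner in the deleted column is always occupied. Your write-up is a careful filling-in of that sketch, including the edge cases (width-zero rows, the width-one bottom row) that the paper leaves implicit.
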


From Theorem \ref{thebeta}, we  deduce that, for $n\geq 1$,
\begin{equation}\label{eq2.1}
\sum_{T\in \mathcal{T}_n}a^{top(T)}b^{left(T)}=\sum_{T\in \mathcal{AT}^{*}_n}a^{top(T)}b^{urr(T)-1}
 \end{equation}
 and
 \begin{equation}\label{eq2.2}
\sum_{T\in \mathcal{T}_n}noc(T)a^{top(T)}b^{left(T)}=\sum_{T\in \mathcal{AT}^{*}_n}noc(T)a^{top(T)}b^{urr(T)-1}.
 \end{equation}

Let $n$ be a positive integer.
Combining   (\ref{eqT}) and (\ref{eq2.1}), we have
\begin{equation}\label{eq2.3}
 \sum_{T\in \mathcal{AT}^{*}_n}a^{top(T)}b^{urr(T)-1}=(a+b)_{n-1}.
 \end{equation}
Notice that  (\ref{eq2.3}) was first proved by   \cite{Cor1} using recurrence relations. Later,  \cite{Cor2} provided two bijective proofs of (\ref{eq2.3}).

 In order to prove Conjecture \ref{con2}, we need to introduce the notion of {\em   type $B$ alternative tableaux}, which are defined based on   shifted Ferrers  diagrams.  For a Ferrers diagram $F$ with $k$ columns, the {\em shifted} Ferrers diagram of $F$, denoted by $\bar{F}$, is the diagram obtained from $F$ by adding $k$ rows of size $1,2,\ldots, k$ above it in increasing order. The size of $\bar{F}$ is defined to be the size of $F$. In this context, the Ferrers diagram $F$ is called the subdiagram of $\bar{F}$. The {\em diagonal } of $\bar{F}$ is the set of rightmost cells in the added rows. A {\em diagonal } cell is a cell in the diagonal. We label the added rows as follows. If the diagonal cell of an added row is in column $i$, then the row is labeled with $-i$. The labels of the other rows and columns remain the same with $F$.  Figure \ref{shifted} illustrates a Ferrers diagram $F$ and its corresponding shifted Ferrers diagram $\bar{F}$, where the diagonal cells are marked with stars.

\begin{figure}[h!]
\begin{center}
\begin{picture}(150,230)
\setlength{\unitlength}{1.5mm}

\put(0,0){\line(0,1){5}   }
\put(0,5){\framebox(5,5)   }

\put(0,10){\framebox(5,5)   } \put(5,10){\framebox(5,5)   } \put(10,10){\framebox(5,5)   }

\put(0,15){\framebox(5,5)   } \put(5,15){\framebox(5,5)   } \put(10,15){\framebox(5,5)   }\put(15,15){\framebox(5,5)   }

\put(20,20){\line(1,0){10}   }\put(25,19.8){\circle*{0.5}  }

\put(-3, 1.5){$10$}\put(-2, 6.5){$8$}\put(-2, 11.5){$5$}\put(-2, 16.5){$3$}

\put(2,  21){$9$}\put(7,  21){$7$}\put(12,  21){$6$}\put(17,  21){$4$}
\put(22,  21){$2$}\put(27,  21){$1$}

\put(40,0){\line(0,1){5}   }
\put(40,5){\framebox(5,5)    }

\put(40,10){\framebox(5,5)   } \put(45,10){\framebox(5,5)    } \put(50,10){\framebox(5,5)   }

\put(40,15){\framebox(5,5)   } \put(45,15){\framebox(5,5)   } \put(50,15){\framebox(5,5)   }\put(55,15){\framebox(5,5)   }

 \put(40,20){\framebox(5,5)   } \put(45,20){\framebox(5,5)   } \put(50,20){\framebox(5,5)   }\put(55,20){\framebox(5,5)    }
 \put(60,20){\framebox(5,5)   }\put(65,20){\framebox(5,5) {*}  }

\put(40,25){\framebox(5,5)    } \put(45,25){\framebox(5,5)   } \put(50,25){\framebox(5,5)    }\put(55,25){\framebox(5,5)   } \put(60,25){\framebox(5,5)   {*}}

\put(40,30){\framebox(5,5)   } \put(45,30){\framebox(5,5)    }

\put(50,30){\framebox(5,5)   }
\put(55,30){\framebox(5,5)  {*} }

\put(40,35){\framebox(5,5)   } \put(45,35){\framebox(5,5)   }

\put(50,35){\framebox(5,5)  {*} }

\put(40,40){\framebox(5,5)    } \put(45,40){\framebox(5,5) {*}  }

\put(40,45){\framebox(5,5)  {*} }

\put(37, 1.5){$10$}\put(38, 6.5){$8$}\put(38, 11.5){$5$}\put(38, 16.5){$3$}
\put(36, 21.5){$-1$}\put(36, 26.5){$-2$}\put(36, 31.5){$-4$}\put(36, 36.5){$-6$}\put(36, 41.5){$-7$}\put(36, 46.5){$-9$}

\put(42,  51){$9$}\put(47,  51){$7$}\put(52,  51){$6$}\put(57,  51){$4$}
\put(62,  51){$2$}\put(67,  51){$1$}

  \end{picture}
\end{center}
\caption{  A Ferrers diagram  $F$ (left) and  its corresponding  shifted Ferrers diagram $\bar{F}$ (right).}\label{shifted}
\end{figure}
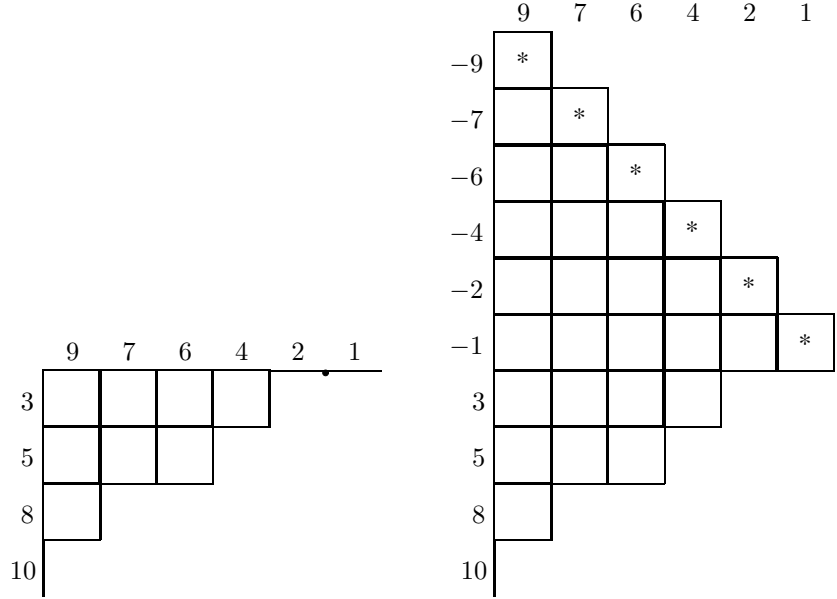

 A  {\em type $B$ alternative } tableau  is a  shifted Ferrers diagram with a partial filling of the cells with left arrows $\leftarrow$ and up arrows $\uparrow$ satisfying the following conditions:
\begin{itemize}
\item[(1)] all the cells to the left of  a  $\leftarrow$ in the same row, or above an  $\uparrow$ in the same column are empty;
    \item[(2)] if column $i$ contains an  $\uparrow$, then row $-i$ is empty;

        \item[(3)] the diagonal cells are empty.

\end{itemize}

The size of an alternative tableau of type $B$ is defined to be  the size of
the corresponding shifted Ferrers diagram.
Figure \ref{alternative} illustrates a type $B$  alternative tableau of
size $10$.
We denote by $\mathcal{AT}^{B}_n$ the set of alternative tableaux of type $B$ of size $n$.
In an alternative tableau of type $B$,   row $i$ is said to be {\em unrestricted } if and only if row $i$ has no $\leftarrow$ and column $|i|$, if it exists, has no $\uparrow$. Denote by $urr(T)$ the number of unrestricted rows of a type $B$ alternative tableau $T$. Analogous to the ordinary Ferrers diagram, a cell is called a {\em corner } of a shifted Ferrers diagram if both its bottom and right edges are border edges.  If a corner $c$ of a type $B$ alternative tableau is empty, then we say that $c$ is non-occupied. Let $noc(T)$ denote the number of non-occupied corners of a type $B$ alternative tableau $T$.

\begin{figure}[h!]
\begin{center}
\begin{picture}(300,240)
\setlength{\unitlength}{1.5mm}

\put(40,0){\line(0,1){5}   }
\put(40,5){\framebox(5,5){$\leftarrow$}   }

\put(40,10){\framebox(5,5)   } \put(45,10){\framebox(5,5) {$\leftarrow$}  } \put(50,10){\framebox(5,5)   }

\put(40,15){\framebox(5,5)   } \put(45,15){\framebox(5,5)   } \put(50,15){\framebox(5,5) {$\leftarrow$}  }\put(55,15){\framebox(5,5)   }

 \put(40,20){\framebox(5,5)   } \put(45,20){\framebox(5,5)   } \put(50,20){\framebox(5,5)   }\put(55,20){\framebox(5,5) {$\leftarrow$}  }
 \put(60,20){\framebox(5,5)   }\put(65,20){\framebox(5,5)   }

\put(40,25){\framebox(5,5) {$\leftarrow$}  } \put(45,25){\framebox(5,5)   } \put(50,25){\framebox(5,5)  {$\uparrow$} }\put(55,25){\framebox(5,5)   } \put(60,25){\framebox(5,5)    }

\put(40,30){\framebox(5,5)   } \put(45,30){\framebox(5,5) {$\leftarrow$}  }

\put(50,30){\framebox(5,5)   }
\put(55,30){\framebox(5,5)    }

\put(40,35){\framebox(5,5)   } \put(45,35){\framebox(5,5)   }

\put(50,35){\framebox(5,5)   }

\put(40,40){\framebox(5,5) {$\uparrow$}  } \put(45,40){\framebox(5,5)    }

\put(40,45){\framebox(5,5)   }

\put(37, 1.5){$10$}\put(38, 6.5){$8$}\put(38, 11.5){$5$}\put(38, 16.5){$3$}
\put(36, 21.5){$-1$}\put(36, 26.5){$-2$}\put(36, 31.5){$-4$}\put(36, 36.5){$-6$}\put(36, 41.5){$-7$}\put(36, 46.5){$-9$}

\put(42,  51){$9$}\put(47,  51){$7$}\put(52,  51){$6$}\put(57,  51){$4$}
\put(62,  51){$2$}\put(67,  51){$1$}

  \end{picture}
\end{center}
\caption{ A type $B$ alternative tableau.}\label{alternative}
\end{figure}
  In the following, we describe a bijection  $\gamma$ between the set $\mathcal{AT}^{sym}_{2n}$ and the set $\mathcal{AT}^B_n$.
 For any symmetric alternative tableau $T\in \mathcal{AT}^{sym}_{2n}$, we can obtain a type $B$ alternative tableaux $\gamma(T)\in \mathcal{AT}^B_n$ by removing  all the cells strictly to the right of the main diagonal  of $T$. Conversely, given a type $B$ alternative tableaux $T'\in \mathcal{AT}^B_n$, we can recover a symmetric alternative tableau $\gamma^{-1}(T')\in \mathcal{AT}^{sym}_{2n} $ by adding the cells obtained by reflecting   the non-diagonal cells  across  its main diagonal.
  By the definition of the bijection $\gamma$, one can easily verify that $\gamma$ has the following properties.
 \begin{theorem}\label{thegamma}
 Let $n\geq 1$.
For any  $T \in \mathcal{AT}^{sym}_{2n}$ and $T'\in \mathcal{AT}^{B}_n$ with $\gamma(T)=T'$, we have the following ralations.
\begin{itemize}
\item[(1)] $urr(T)=urr(T')$;
    \item[(2)] $noc(T)=noc'(T')+2noc''(T')$, where $noc''(T')=noc(T')-noc'(T')$  and   $noc'(T')$ is the number of non-occupied diagonal corners of $T'$.
 \end{itemize}
 \end{theorem}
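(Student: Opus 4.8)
The plan is to analyze the bijection $\gamma$ cell by cell, exploiting the facts that reflection across the main diagonal interchanges $\leftarrow$ and $\uparrow$ and that, for a symmetric tableau, this reflection fixes the tableau. First I would record the two structural facts that drive everything. (i) Since a diagonal cell is a fixed point of the reflection while the reflection sends $\leftarrow\mapsto\uparrow$, every diagonal cell of $T$ (equivalently of $T'$) is empty, consistent with condition $(3)$ of a type $B$ tableau. (ii) Writing a cell as $(r,s)$ with $r$ the row index from the top and $s$ the column index from the left, the cells retained by $\gamma$ are those with $s\le r$, and a $\leftarrow$ in a deleted cell $(r,s)$ with $s>r$ corresponds under the symmetry to an $\uparrow$ in the retained cell $(s,r)$, which lies in column $r$ of $T'$.

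For part $(1)$ I would set up the correspondence between the rows of $T$ and the rows of $T'$. Each row $r$ of $T$, of length $\ell_r$, keeps its cells in columns $1,\dots,\min(r,\ell_r)$; a short row ($\ell_r<r$) keeps its whole length and becomes a subdiagram row (positive label, no diagonal cell), while a long row ($\ell_r\ge r$) is cut at the diagonal and becomes an added row (negative label, diagonal cell in column $r$). The key bookkeeping observation, which I would deduce from the symmetry $(c,r)\in T\Leftrightarrow(r,c)\in T$, is that column $r$ of $T'$ exists precisely when $\ell_r\ge r$, that is, exactly for the added rows. Using fact (ii), row $r$ of $T$ has no $\leftarrow$ iff its retained part (row $r$ of $T'$) has no $\leftarrow$ and its deleted part has no $\leftarrow$, the latter being equivalent to column $r$ of $T'$ having no $\uparrow$. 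Comparing this with the type $B$ definition of an unrestricted row — no $\leftarrow$ in the row together with no $\uparrow$ in column $|i|$ whenever that column exists — yields $urr(T)=urr(T')$ term by term.

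For part $(2)$ I would classify the corners of $T$ by the action of the reflection: a corner on the diagonal is a fixed point, while a corner off the diagonal lies in a size-two orbit with its mirror image (distinct, since $(r,s)\ne(s,r)$ when $r\ne s$). I would then establish two matching statements: the diagonal corners of $T$ are in bijection with the diagonal corners of $T'$, and the strictly below-diagonal corners of $T$ are in bijection with the non-diagonal corners of $T'$, each pairing with its reflected partner above the diagonal. Since the reflection preserves emptiness and diagonal cells are always empty by (i), every diagonal corner is automatically non-occupied; hence the diagonal corners contribute $noc'(T')$ and the off-diagonal ones contribute $2\,noc''(T')$, giving $noc(T)=noc'(T')+2\,noc''(T')$.

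The main obstacle is the corner bijection near the diagonal in part $(2)$: I must verify that deleting the strictly-right-of-diagonal cells neither creates nor destroys corners, and that a corner of $T'$ remains a corner of $T$ once the reflected cells are restored. The delicate case is a retained corner $(r,s)$ with $r=s+1$, whose right neighbour is the diagonal cell $(s+1,s+1)$; here I would use that diagonal cells are retained by $\gamma$ and that $T$ is a genuine symmetric Ferrers shape to show that this neighbour is absent in $T$ exactly when it is absent in $T'$, so that corner status is preserved in both directions. Handling this boundary case is the only place requiring real care; the remainder of the argument is a direct transcription of the reflection symmetry.
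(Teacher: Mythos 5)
Your proposal is correct and takes essentially the approach the paper intends: the paper gives no argument beyond ``one can easily verify'' from the definition of $\gamma$, and your reflection-based verification (diagonal cells forced empty since reflection exchanges $\leftarrow$ and $\uparrow$, deleted $\leftarrow$'s reappearing as $\uparrow$'s in the matching column for part (1), and the partition of corners into fixed diagonal corners versus mirror pairs for part (2)) is exactly the omitted routine check. Your treatment of the boundary case $r=s+1$ is also sound, since diagonal cells are retained by $\gamma$, so the right neighbour $(s+1,s+1)$ is present in $T$ if and only if it is present in $T'$.
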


  Combining  Theorems \ref{thesym} and  \ref{thegamma}, we have
  \begin{equation}\label{eq2.4}
  \sum_{T\in \mathcal{T}^{sym}_{2n+1}}x^{left(T)-1}=\sum_{T\in \mathcal{AT}^{sym}_{2n}}x^{urr(T)-1}=\sum_{T\in \mathcal{AT}^{B}_{n}}x^{urr(T)-1},
  \end{equation}
 and
 \begin{equation}\label{eq2.5}
 \begin{array}{lll}
  \sum_{T\in \mathcal{T}^{sym}_{2n+1}}noc(T)x^{left(T)-1}&=&\sum_{T\in \mathcal{AT}^{sym}_{2n}}noc(T)x^{urr(T)-1}\\
  &=&\sum_{T\in \mathcal{AT}^{B}_{n}}(noc'(T)+2noc''(T))x^{urr(T)-1}.
  \end{array}
  \end{equation}

  In view of   (\ref{eqsT}) and (\ref{eq2.4}), we have
  \begin{equation}\label{eq2.6}
  AT^{B}_{n}(x)=\sum_{T\in \mathcal{AT}^{B}_{n}}x^{urr(T)-1}=2^n(x+1)_{n-1}.
  \end{equation}

\section{A bijection between $\mathcal{AT}^*_n$ and $\mathcal{L}_n$}
\label{sec:li}
In this section, we will present a bijection between alternative tableaux in which each column contains an $\uparrow$ and linked partitions. We begin with some definitions and notations.

Let $[n]=\{1,2,\ldots, n\}$. A {\em linked } partition of $[n]$ is a collection of nonempty subsets
$B_1, B_2, \ldots, B_k$ of $[n]$, called blocks, such that the union of $B_1$, $B_2$,$\ldots, $ $B_k$ is $[n]$ and any two distinct blocks are nearly disjoint. Two blocks $B_i$ and $B_j$ are said to be {\em nearly} disjoint if for any $k\in B_i\cap B_j$ one of the following conditions holds:
\begin{itemize}
\item[(1)] $k=min(B_i)$,  $|B_i|>1$, and $k\neq min(B_j)$;
\item[(2)] $k=min(B_j) $, $|B_j|>1$, and $k\neq min(B_i)$.
\end{itemize}

We denote by $\mathcal{L}_n$ the set of linked partitions of $[n]$.
   \cite{Chen1} introduced the linear representation of  a linked partition.
  For a linked partition of $[n]$, we arrange $n$ vertices  in a horizontal line and label them  with   $1,2,\ldots, n$ from left to right. For a block $B=\{b_1, b_2, \ldots, b_k\}$ with $k\geq 2$ and $b_1<b_2<\ldots<b_k$, we draw
an arc from $b_1$ to $b_j$ for  $j=2,\ldots ,k$. As an example, the linear representation of the linked partition $\{1,2,3,6\}, \{4,5\}, \{6,8\}, \{7,9\}, \{9,10\}, \{11\}$ is illustrated in Figure \ref{bijection1}.
For $i < j$, we use a pair $(i,  j)$ to denote an arc from $i$ to $j$, and we call $i$ and $j$ the left-hand
endpoint and the right-hand endpoint of arc $(i,j)$, respectively. A vertex labeled with $i$ is called vertex $i$.   Notice that the linear representation of a linked partition can also be viewed as   a simple graph  on $[n]$ such that for each vertex $i$ there is at most
one vertex $j$ such that $1\leq j\leq i$ and $j$ is connected to $i$.

A {\em type $B$} linked partition of $[n]$ is a linked partition of $[n]$ in which each integer may be negated.
We denote by $\mathcal{L}^B_n$ the set of type $B$ linked partition of $[n]$.
The linear representation of a type $B$ linked partitions is the same as that of the  ordinary linked partition  except that the labels of the vertices  maybe negative and the labels are increasing from left to right.

From now on, we will only consider linear representations of the (type B) linked partitions unless otherwise stated.
Let $\tau$ be a linked partition of $[n]$. A vertex $t$ of $\tau$  is called an {\em origin} if it is only a left-hand endpoint, or a
{\em transient} if it is both a left-hand point and a right-hand endpoint, or a {\em singleton} if it is an
isolated vertex,  or a {\em destination} if it   is only a right-hand endpoint. For a destination $i_t$, a path
$(i_1, i_2, \ldots,i_t)$ with  $i_1<i_2<\ldots<i_t$ is said to be  a {\em maximal } path with destination $i_t$ if $i_1$ is an origin.
For example, in the linked partition given in Figure \ref{bijection1},   vertices $1,4,7$ are   origins,   vertices $6,9$ are   transients, vertices $2,3,5,8,10$ are destinations, vertex  $11$ is a singleton and (1,6,8) is a maximal  path with destination $8$.

Now   we define  a map $\Phi$: $\mathcal{AT}^{*}_n\rightarrow \mathcal{L}_n$.
Let $T$ be an    alternative tableau  of size $n$ without empty columns.    We construct a  linked partition $\tau=\Psi (T)$ as follows.  First, we arrange $n$ vertices in a line and label them by $1,2,\ldots, n$ from left to right.
 For column $j$,  suppose that the cell  $(i_1, j)$ is   filled with an $\uparrow$, and the  cells $(i_2,j)$, $(i_3, j)$, $\ldots, $ $(i_t, j)$ with  $i_2<i_3<\ldots<i_t$ are  cells filled with $\leftarrow's$. For $\ell=1,2,\ldots, t$, let $(i_{\ell}, i_{\ell+1})$ be an arc in $\tau$, where $i_{\ell+1}=j$. For example, Figure \ref{bijection1} demonstrates an alternative tableau $T\in \mathcal{AT}^{*}_{11}$ and its corresponding linked partition $\Phi(T)\in \mathcal{L}_{11}$.
Notice that there is at most one $\leftarrow$ in each row and exactly one $\uparrow$ in each column. This ensures that for any vertex $j$ in $\tau$, there is at most one arc whose right-hand endpoint is $j$. Hence, we have $\Phi(T)\in \mathcal{L}_n$.

\begin{figure}[h!]
\begin{center}
\begin{picture}(300,200)
\setlength{\unitlength}{1.5mm}

\put(0,0){\line(0,1){5}}
\put(0,5){\framebox(5,5)  {$\leftarrow$} }
\put(0,10){\framebox(5,5)  {$\uparrow$} }
 \put(5,10){\framebox(5,5)    }

 \put(0,15){\framebox(5,5)    }
 \put(5,15){\framebox(5,5)  {$\leftarrow$}  }

 \put(0,20){\framebox(5,5)    }

 \put(5,20){\framebox(5,5)    }
 \put(10,20){\framebox(5,5)  {$\uparrow$}  }

 \put(0,25){\framebox(5,5)    }

 \put(5 ,25){\framebox(5,5)  {$\uparrow$}  }
 \put(10,25){\framebox(5,5)    }
  \put(15,25){\framebox(5,5)  {$\uparrow$}  }\put(20,25){\framebox(5,5)  {$\uparrow$}  }

  \put(-3, 1.5){$11$} \put(-2, 6.5){$9$}\put(-2, 11.5){$7$}\put(-2, 16.5){$6$}

  \put(-2, 21.5){$4$}\put(-2, 26.5){$1$}

  \put(1.5, 31){$10$}\put(6.5, 31){$8$}\put(11.5, 31){$5$}\put(16.5, 31){$3$}
  \put(21.5, 31){$2$}

  \put(35,0){\circle*{0.5}}\put(40,0){\circle*{0.5}}\put(45,0){\circle*{0.5}}
   \put(50,0){\circle*{0.5}}\put(55,0){\circle*{0.5}}\put(60,0){\circle*{0.5}}
  \put(65,0){\circle*{0.5}}\put(70,0){\circle*{0.5}}\put(75,0){\circle*{0.5}}
  \put(80,0){\circle*{0.5}}\put(85,0){\circle*{0.5}}

  \qbezier(35,0)(37.5,2.5)(40,0)\qbezier(35,0)(40,5)(45,0)
  \qbezier(35,0)(47.5,12)(60,0)\qbezier(50,0)(52.5,5)(55,0)
  \qbezier(60,0)(65.5,5)(70,0)\qbezier(65,0)(70,5)(75,0)
  \qbezier(75,0)(77.5,4)(80,0)

  \put(34,-2.5){$1$}\put(39,-2.5){$2$}\put(44,-2.5){$3$}\put(49,-2.5){$4$}
  \put(54,-2.5){$5$}\put(59,-2.5){$6$}\put(64,-2.5){$7$}\put(69,-2.5){$8$}
  \put(74,-2.5){$9$}\put(79,-2.5){$10$}\put(84,-2.5){$11$}
  \end{picture}
\end{center}
\caption{ An alternative tableau $T$ (left) and its corresponding linked partition $\Phi(T)$ (right).}\label{bijection1}
\end{figure}
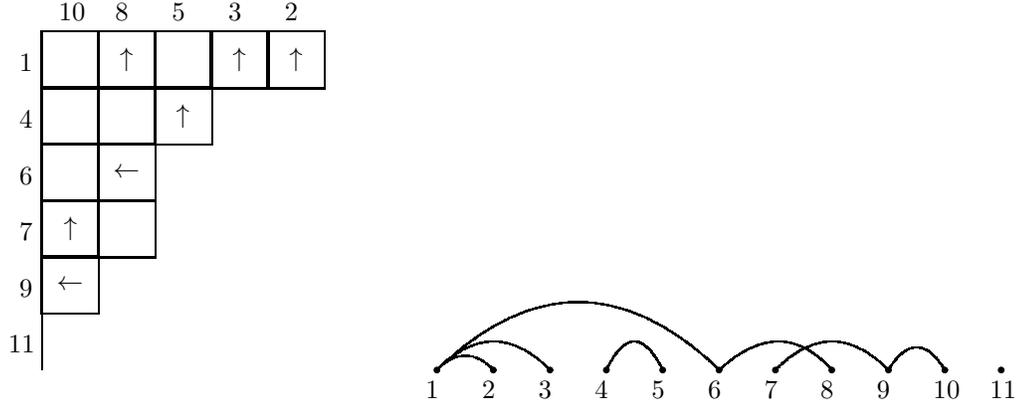

\begin{theorem}\label{thphi}
The map $\Phi :$  $\mathcal{AT}^{*}_n\rightarrow \mathcal{L}_n$ is a bijection.
\end{theorem}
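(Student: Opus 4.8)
The plan is to prove that $\Phi$ is a bijection by exhibiting an explicit inverse map $\Phi^{-1}: \mathcal{L}_n \to \mathcal{AT}^{*}_n$ and verifying that $\Phi$ and $\Phi^{-1}$ are mutually inverse. First I would record the key structural feature guaranteed by the construction: reading off column $j$ of $T$, the unique $\uparrow$ at row $i_1$ together with the $\leftarrow$'s at rows $i_2 < \cdots < i_t$ produces exactly the arcs $(i_\ell, j)$ for $\ell = 1, \ldots, t-1$ where we set $i_{t} = j$ is the label convention; more precisely each column contributes arcs all sharing the same right-hand structure, and the crucial observation already noted in the text is that each vertex $j$ is the right-hand endpoint of \emph{at most one} arc (since there is at most one $\leftarrow$ per row and exactly one $\uparrow$ per column). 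This is what places $\Phi(T)$ in $\mathcal{L}_n$, and the inverse must reconstruct the filling from this at-most-one-incoming-arc property.

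The inverse map I would define as follows. Given a linked partition $\tau \in \mathcal{L}_n$ with its linear representation on vertices $1, \ldots, n$, I reconstruct the underlying shape and filling column by column. For each column label $j$, the set of arcs of $\tau$ associated to $j$ determines which cell in column $j$ gets the $\uparrow$ and which get $\leftarrow$'s: the $\uparrow$ corresponds to the arc whose left-hand endpoint is an origin or the start of the relevant maximal path, while the $\leftarrow$'s correspond to the remaining arcs incident to that column. Concretely, I would translate the "origin / transient / destination" vocabulary introduced just before the theorem into the arrow pattern — an origin vertex contributes a column with an $\uparrow$, a destination vertex $j$ records the unique incoming arc as the row-position of an arrow, and transients sit at both ends — and then verify that the resulting filling satisfies the three defining conditions of an alternative tableau (cells left of a $\leftarrow$ and above an $\uparrow$ are empty) and that every column contains an $\uparrow$, placing it in $\mathcal{AT}^{*}_n$.

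The two verifications $\Phi \circ \Phi^{-1} = \mathrm{id}$ and $\Phi^{-1} \circ \Phi = \mathrm{id}$ are then bookkeeping: one checks that the column-by-column decoding recovers exactly the arcs encoded, using the bijective correspondence between the rows carrying arrows in a fixed column and the arcs sharing that column's right endpoint. The well-definedness of $\Phi$ into $\mathcal{L}_n$ is essentially granted by the excerpt (the near-disjointness of blocks follows from the at-most-one-incoming-arc condition, which forces the graph-theoretic characterization of linked partitions stated in the text), so the new content is really the construction of $\Phi^{-1}$ and the proof that it lands in $\mathcal{AT}^{*}_n$.

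The main obstacle I anticipate is showing that $\Phi^{-1}$ produces a \emph{valid} alternative tableau shape — that is, that the rows and columns dictated by the arcs assemble into a genuine Ferrers diagram with the arrows respecting the emptiness conditions (1)--(3) of the alternative tableau definition. The at-most-one-incoming-arc property handles the "one $\leftarrow$ per row" constraint cleanly, but one must argue carefully that the relative positions of arc endpoints force the nesting/ordering needed for a left-justified Ferrers shape and that no two arrows conflict (e.g. a cell cannot simultaneously be forced to lie left of a $\leftarrow$ and carry an $\uparrow$). I expect this geometric consistency check, rather than the arc-counting, to be where the real care is required; once the shape is shown to be well-formed, the emptiness conditions follow from the linear ordering of the vertices and the definition of near-disjointness.
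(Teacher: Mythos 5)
Your overall strategy --- proving bijectivity by exhibiting an explicit inverse --- is the same as the paper's, but your sketch of the inverse rests on a misreading of how $\Phi$ encodes arcs, and this breaks the construction. In the forward map, column $j$ with an $\uparrow$ in row $i_1$ and $\leftarrow$'s in rows $i_2<\cdots<i_t$ encodes the \emph{chain} of arcs $(i_1,i_2),(i_2,i_3),\ldots,(i_t,j)$, i.e.\ a path terminating at $j$, not a family of arcs all ending at $j$ as in your first paragraph (``produces exactly the arcs $(i_\ell,j)$''). Since every vertex of a linked partition has at most one incoming arc, only \emph{one} arc of $\tau$ has right-hand endpoint $j$; hence your proposed per-column decoding (``the $\leftarrow$'s correspond to the remaining arcs incident to that column,'' and the claimed ``bijective correspondence between the rows carrying arrows in a fixed column and the arcs sharing that column's right endpoint'') can never reconstruct a column containing more than one arrow. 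The correct inverse must recover, for each destination $j$, an entire path of arcs, most of which are not incident to $j$, and --- crucially --- must decide to which column the unique incoming arc of a transient vertex belongs when that vertex feeds several destinations.

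That assignment problem is exactly the ``main obstacle'' you flag at the end but leave unresolved, and it is the real content of the paper's proof. The paper processes the destinations $b_1>b_2>\cdots>b_k$ in \emph{decreasing} order: at step $m$ it takes the maximal path $(a_1,\ldots,a_t,b_m)$ with destination $b_m$ in the current partition $\tau^{(m-1)}$, fills column $b_m$ with an $\uparrow$ at $(a_1,b_m)$ and $\leftarrow$'s at $(a_j,b_m)$ for $2\leq j\leq t$, and deletes the arcs of that path before continuing. The order is not cosmetic: for the linked partition with arcs $(1,2),(2,3),(2,4)$, assigning the path $(1,2,3)$ to column $3$ first would later force an $\uparrow$ into cell $(2,4)$, which lies to the left of the $\leftarrow$ in cell $(2,3)$, violating the alternative-tableau condition, whereas the greedy decreasing order yields a valid filling. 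The emptiness of arrow-pointed cells is then established by the contradiction argument you would also need: if a cell $(a_s,b_p)$ with $p<m$ were occupied, the arc $(a_{s-1},a_s)$ would have lain on the maximal path with destination $b_p$ and been deleted at step $p$, contradicting its presence in $\tau^{(m-1)}$. By contrast, the shape issue you single out as the hard point is a non-issue: the underlying Ferrers diagram is obtained simply by declaring the destinations to be the column labels, and since the paper's Ferrers diagrams explicitly allow empty rows and columns, no geometric consistency of the shape needs checking --- all the care goes into the arc-to-column assignment and the resulting emptiness conditions, which your proposal does not supply.
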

\begin{proof} We will prove the assertion by constructing the inverse map of $\Phi$.
Now we define  a map $\Psi:$ $\mathcal{L}_n\rightarrow \mathcal{AT}^{*}_n$. Let $\tau$ be a linked partition of $[n]$. Let $F$ be a Ferrers diagram of size $n$ such that for all $j\in [n]$, $j$ is the label of a column if and only if vertex $j$ is a destination.

Suppose that $\tau$ has exactly $k$   destinations, which are  vertices $b_1, b_2, \ldots, b_k$ with $b_1>b_2>\ldots >b_k$. We set $T^{(0)}$ to be the empty filling of $F$ and $\tau^{(0)}=\tau$. For $m$ from $1$ to $k$, we define $T^{(m)}$ to be
the tableau obtained from  $T^{(m-1)}$ by filling the cells in column $b_m$ as follows.
       Suppose that
     the maximal   path in $\tau^{(m-1)}$ with   destination $b_m$ is $(a_1, a_2, \ldots, a_t,  b_m)$.
     Then we fill the cell  $(a_1,b_m)$ with an $\uparrow$. For all $2\leq j\leq t$,  fill the cell $(a_j, b_m)$ with a $\leftarrow$.  Let $\tau^{(m)}$ be  the linked partition obtained form $\tau^{(m-1)}$ by removing the arcs in the path $(a_1, a_2, \ldots, a_t, b_m)$.
Set $T=\Psi (\tau)=T^{(k)}$.

It is apparent that   each column  of $F$ is filled with an $\uparrow$ possibly along with some $\leftarrow's$. We claim that    the cells pointed by an arrow are empty in $T^{(m)}$ for all $0\leq m\leq k$.  If not, suppose that the cell $(a_s, b_p)$ is non-empty in $T^{(m)}$ for some $2\leq s\leq t$ and $p< m$. Then the arc $(a_{s-1}, a_s)$ would be in the maximal  path  with destination $b_p$.   According to the construction  of $\tau^{(p)}$, the arc  $(a_{s-1}, a_s)$ is not contained in $\tau^{(j)}$ for all $j\geq p$.  This contradicts  the fact that $\tau^{(m-1)}$ contains  the arc $(a_{s-1}, a_s)$.
Hence, the claim is proved. This implies that  $\Psi(\tau)\in \mathcal{AT}^*_n$. From the definition of the map $\Phi$, it is clear that $\Phi(T)=\tau$. This implies that the map $\Phi: \mathcal{AT}^{*}_n\rightarrow \mathcal{L}_n$ is a bijection with inverse $\Phi^{-1}=\Psi$. This completes the proof. \end{proof}

From the definition of the map $\Phi$, it is not difficult to see  that $\Phi$ has the following properties.

\begin{theorem}\label{phi}
For any $T\in \mathcal{AT}^{*}_n$ and $\tau\in \mathcal{L}_n$ with $\Phi(T)=\tau$, we have the following.
\begin{itemize}
\item[(1)] The labels of the columns of $T$ are exactly the labels of the destinations of $\tau$.
            \item[(2)] The labels of the rows of $T$ are exactly the labels of the origins, singletons and transients of $\tau$.
\item[(3)] The labels of the unrestricted rows of $T$ are exactly the labels of the origins and singletons of $\tau$.
    \item[(4)] The number of $\uparrow's$ in the topmost row of $T$ is equal to the number of arcs whose left-hand endpoints are  vertex $1$ in $\tau$.

\end{itemize}
\end{theorem}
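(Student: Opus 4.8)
The plan is to verify the four statements directly from the construction of $\Phi$, by determining for each vertex of $\tau=\Phi(T)$ whether it is a left-hand endpoint and whether it is a right-hand endpoint of an arc. The whole argument rests on two bookkeeping observations about a single column. When column $j$ is processed --- with its unique $\uparrow$ at $(i_1,j)$ and its $\leftarrow$'s at $(i_2,j),\ldots,(i_t,j)$, where $i_2<\cdots<i_t$ --- the arcs created are exactly $(i_1,i_2),(i_2,i_3),\ldots,(i_{t-1},i_t),(i_t,j)$. Hence the left-hand endpoints contributed by column $j$ are precisely the arrow-bearing rows $i_1,\ldots,i_t$, while the right-hand endpoints contributed are the $\leftarrow$-rows $i_2,\ldots,i_t$ together with the column label $j$ itself. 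Summing over all columns yields two dictionaries: a vertex $v$ is a left-hand endpoint of some arc if and only if row $v$ contains at least one arrow, and $v$ is a right-hand endpoint if and only if $v$ is a column label or row $v$ contains a $\leftarrow$. Here I use that each row has at most one $\leftarrow$ and each column exactly one $\uparrow$, and that every label of the underlying Ferrers diagram is a row label or a column label but never both.

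With these dictionaries in hand, I would classify every vertex using the partition into origins (left endpoint only), transients (both), singletons (neither), and destinations (right endpoint only). A column label $j$ is never a row, so it is never a left-hand endpoint; and since $T\in\mathcal{AT}^{*}_n$ forces an $\uparrow$ in column $j$, the final arc $(i_t,j)$ (or $(i_1,j)$ when $t=1$) makes $j$ a right-hand endpoint. Thus the column labels are exactly the destinations, which is $(1)$. Statement $(2)$ then follows at once: the row labels are exactly the labels that are not column labels, hence exactly the non-destinations, that is, the origins, transients and singletons.

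For $(3)$ I would refine this within the row labels according to the presence of a $\leftarrow$. By the right-hand dictionary a row $v$ is a right-hand endpoint exactly when it contains a $\leftarrow$; since any arrow-bearing row is a left-hand endpoint, a row with a $\leftarrow$ is both, i.e.\ a transient, whereas a row with no $\leftarrow$ is an origin if it carries an $\uparrow$ and a singleton if it is empty. As the unrestricted rows are by definition the rows free of $\leftarrow$'s, they are precisely the origins and singletons, giving $(3)$.

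Finally, for $(4)$ I would first note that the topmost row is always row $1$, because the border step labeled $1$ is the north-east-most step and is the south step of the longest (top) row. Since row $1$ lies above every other row, any arrow it contains is the topmost arrow of its column and hence an $\uparrow$; a $\leftarrow$ in row $1$ would require an $\uparrow$ strictly above it in the same column, which is impossible. Moreover $1$ is the smallest label, so in any column's increasing list $i_1<\cdots<i_t$ the value $1$ can occur only as $i_1$. Consequently each $\uparrow$ in row $1$ sits in a distinct column and contributes exactly one arc with left-hand endpoint $1$, and no other arc has left-hand endpoint $1$; the two counts therefore agree, proving $(4)$. The one step I would state most carefully is the right-hand-endpoint dictionary, since it simultaneously drives $(1)$ and the origin/transient split in $(3)$; the remaining verifications are routine.
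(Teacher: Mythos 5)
Your proof is correct: the paper offers no argument for this theorem, simply asserting that the properties follow from the definition of $\Phi$, and your direct verification via the two endpoint dictionaries (left-hand endpoints $=$ arrow-bearing rows, right-hand endpoints $=$ column labels together with $\leftarrow$-bearing rows) is exactly the routine check the authors leave to the reader. One small point worth making explicit in part (4): the topmost row is labeled $1$ because $T\in\mathcal{AT}^{*}_n$ has an $\uparrow$ in every column, hence no empty columns, so the first border step is necessarily a south step --- which your argument uses implicitly and which is the only place the hypothesis ``each column contains an $\uparrow$'' enters beyond part (1).
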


 To conclude this section, we remark that
 \cite{Chen2} established a bijection between permutation tableaux of size $n$ and linked partitions of $[n]$.  The combination of their bijection  and  the bijection between alternative tableau of size $n$ in which each column contains an up arrow    and permutation tableaux of size $n$ established  by  \cite{Cor1} is in fact identical with our bijection $\Phi$.

\section{Proof of Conjecture \ref{con1}}
\label{sec:co1}
In this section, we aim to prove Conjecture \ref{con1}.
For  $2\leq i\leq n$, denote by $\mathcal{AT}^{*}_{n, i}$ the set of alternative tableaux  $T\in \mathcal{AT}^{*}_n$ such that the cell $(i-1,i)$ of $T$ is a non-occupied corner. Notice that for any $T\in \mathcal{AT}^{*}_n$, each column of $T$ contains an up  arrow. This implies that the cell $(1,2)$ cannot be a non-occupied corner of $T$. Hence we have $\mathcal{AT}^{*}_{n,2}=\emptyset$.

Recall that for an alternative tableau $T$, the weight of $T$ is defined to be $a^{top(T)}b^{urr(T)-1}$.
Analogously, for a linked partition $\tau$, we define  the {\em weight} of $\tau$    to be  $a^{one(\tau)}b^{os(\tau)-1}$, where $os(T)$ is the total  number of origins and singletons  of $\tau$ and $one(\tau)$ is the number of arcs whose left-hand endpoints are vertex $1$ in $\tau$. Let
$$
L_n(a,b)=\sum_{\tau\in \mathcal{L}_n}a^{one(\tau)}b^{os(\tau)-1}.
$$
By Theorem \ref{phi} and Formula (\ref{eq2.3}), we have
\begin{equation}\label{eq:Ln}
L_n(a,b)=\sum_{\tau\in \mathcal{L}_n}a^{one(\tau)}b^{os(\tau)-1}=\sum_{T\in \mathcal{AT}^{*}_n}a^{top(T)}b^{urr(T)-1}=(a+b)_{n-1}.
\end{equation}

By Theorems \ref{thphi} and \ref{phi}, the bijection $\Phi$ maps  an alternative tableau $T\in \mathcal{AT}^{*}_{n, i} $ to a linked partition $\tau=\Phi(T)$ satisfying that
\begin{itemize}
\item[(a1) ] $top(T)=one(\tau)$ and $urr(T)=os(\tau)$;
\item[(a2) ] vertex $i$ is a destination of $\tau$;
\item[(a3) ] vertex $i-1$ is not a destination of $\tau$;
 \item[(a4) ] $(i-1, i)$ is not an arc in $\tau$.
\end{itemize}
For all $2\leq i\leq n$, denote by $\mathcal{L}_{n,i}$ the set of   linked partitions of $n$   satisfying   conditions (a2), (a3) and  (a4).
 Thus, the map $\Phi$ is a weight preserving bijection between the set $\mathcal{AT}^{*}_{n, i}$ and the set $\mathcal{L}_{n,i}$. This yields that,
 for all $3\leq i\leq n$,
 \begin{equation}\label{tau8}
 \sum_{T\in \mathcal{AT}^{*}_{n, i} }a^{top(T)}b^{urr(T)-1}=\sum_{\tau\in \mathcal{L}_{n,i}}a^{one(\tau)}b^{os(\tau)-1}.
 \end{equation}

 \begin{lemma}\label{lemweightp}
 For $n\geq 3$, we have
 \begin{equation}\label{weightp}
   \sum_{i=3}^{n}\sum_{\tau\in \mathcal{L}_{n,i}}a^{one(\tau)}b^{os(\tau)-1}=\big[ (n-2)ab+{n-2\choose 2}(a+b)+{n-2\choose 3}\big]\cdot  L_{n-2}(a,b).
   \end{equation}
 \end{lemma}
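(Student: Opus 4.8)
The plan is to avoid any bijection and instead compute the sum directly after encoding linked partitions by their ``parent functions.'' Recall from the linear representation that each vertex $j$ of a linked partition $\tau\in\mathcal{L}_n$ is the right-hand endpoint of at most one arc; write $p_j$ for the left-hand endpoint of that arc and set $p_j=0$ when $j$ is not a right-hand endpoint. This gives a bijection between $\mathcal{L}_n$ and the product $\prod_{j=2}^{n}\{0,1,\dots,j-1\}$, the coordinates being completely independent (the near-disjointness condition is automatic, since a vertex cannot have two incoming arcs). The key point of this encoding is that the weight factorizes: because $one(\tau)=\#\{j\ge 2:p_j=1\}$ and $os(\tau)-1=\#\{j\ge 2:p_j=0\}$, we have $a^{one(\tau)}b^{os(\tau)-1}=\prod_{j=2}^{n}w(p_j)$, where $w(0)=b$, $w(1)=a$ and $w(v)=1$ for $v\ge 2$. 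As a sanity check this recovers $L_n(a,b)=\prod_{j=2}^{n}(a+b+j-2)=(a+b)_{n-1}$ at once.

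First I would restate the defining conditions of $\mathcal{L}_{n,i}$ in these coordinates: conditions (a2) and (a4) together say exactly that $p_i\in\{1,\dots,i-2\}$ and that no $j>i$ has $p_j=i$, while (a3) says that $p_{i-1}=0$ or $p_j=i-1$ for some $j>i$. I would then evaluate $\sum_{\tau\in\mathcal{L}_{n,i}}a^{one(\tau)}b^{os(\tau)-1}$ by summing $\prod_j w(p_j)$ over the allowed values. The coordinates $p_2,\dots,p_{i-2}$ are unconstrained and contribute $(a+b)_{i-3}$; the coordinate $p_i$ ranges over $\{1,\dots,i-2\}$ and contributes $a+(i-3)$. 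The remaining coordinates $p_{i-1},p_{i+1},\dots,p_n$ are coupled only through (a3), which I would treat by inclusion--exclusion: from the sum with ``$p_j\ne i$ for all $j>i$'' subtract the sub-sum in which additionally $p_{i-1}\ne 0$ and no $j>i$ has $p_j=i-1$ (that is, $i-1$ \emph{is} a destination). Each factor is then a one-variable sum, e.g. $\sum_{v=0}^{j-1}w(v)=a+b+j-2$, and deleting the one or two forbidden values $i$, $i-1$ subtracts $1$ each since $w(i)=w(i-1)=1$; so both sub-sums are products that telescope into rising factorials.

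Carrying this out should collapse, after $(a+b)_{i-3}(a+b+i-3)=(a+b)_{i-2}$ and $(a+b)_{i-2}(a+b+i-2)_{n-i}=(a+b)_{n-2}$ (and similarly for the subtracted term), to the clean closed form
\[
\sum_{\tau\in\mathcal{L}_{n,i}}a^{one(\tau)}b^{os(\tau)-1}=(a+b)_{n-3}\,(a+i-3)(b+n-i).
\]
Summing over $i=3,\dots,n$ and using $(a+b)_{n-3}=L_{n-2}(a,b)$ then reduces the lemma to the elementary polynomial identity $\sum_{i=3}^{n}(a+i-3)(b+n-i)=(n-2)ab+\binom{n-2}{2}(a+b)+\binom{n-2}{3}$, which follows by expanding $\sum_{t=0}^{n-3}(a+t)(b+n-3-t)$ and applying the formulas for $\sum t$ and $\sum t^2$. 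The main obstacle is the middle step: the conditions ``$i$ is a destination'' and ``$i-1$ is not a destination'' are not local to single coordinates (they forbid a value among all $p_j$ with $j>i$ and assert an existence statement), so the sum does not factor outright; it is precisely the inclusion--exclusion on the $(i-1)$-condition, together with the fact that the two forbidden values each carry weight $1$, that restores a product form and lets the rising factorials telescope.
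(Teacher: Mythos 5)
Your proposal is correct, and it reaches the lemma by a genuinely different route than the paper. Structurally you and the paper perform the same complementary counting on condition (a3): the paper writes $\mathcal{L}_{n,i}=\mathcal{M}_{n,i}\setminus\mathcal{N}_{n,i}$, and your subtracted event ``$p_{i-1}\neq 0$ and no $j>i$ has $p_j=i-1$'' is exactly membership in $\mathcal{N}_{n,i}$ (your extra exclusion $p_i\neq i-1$ is automatic there, since a destination $i-1$ is never a left-hand endpoint). The difference lies in how the two pieces are evaluated: the paper deletes vertex $i$ (resp.\ vertices $i-1$ and $i$) to obtain weight-controlled bijections with $\mathcal{L}_{n-1}$ (resp.\ $\mathcal{L}_{n-2}$), yielding the multipliers $(a+i-3)$ and $(a+i-3)^2$ of \eqref{tau2} and \eqref{tau3}, and then uses $L_{n-1}(a,b)=(a+b+n-3)L_{n-2}(a,b)$; you instead use the parent-function encoding $\tau\mapsto(p_2,\dots,p_n)\in\prod_{j=2}^{n}\{0,\dots,j-1\}$, under which the weight factorizes with $w(0)=b$, $w(1)=a$, $w(v)=1$ for $v\geq 2$, so each piece of the inclusion--exclusion is a product of one-coordinate sums. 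I checked the details: your translations of (a2)--(a4) into coordinates are right, the deleted values $i$ and $i-1$ each carry weight $1$ because $i\geq 3$, and the telescoping gives $(a+i-3)\bigl[(a+b)_{n-2}-(a+i-3)(a+b)_{n-3}\bigr]=(a+b)_{n-3}(a+i-3)(b+n-i)$; your two partial sums $(a+i-3)(a+b)_{n-2}$ and $(a+i-3)^2(a+b)_{n-3}$ in fact coincide term-by-term with the paper's \eqref{tau2} and \eqref{tau3}, and the final summation identity $\sum_{i=3}^{n}(a+i-3)(b+n-i)=(n-2)ab+\binom{n-2}{2}(a+b)+\binom{n-2}{3}$ is elementary as you say. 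The one point you should make explicit is that the encoding is a bijection, i.e.\ that \emph{every} tuple $(p_j)$ yields a valid linked partition: the blocks are $\{v\}\cup\{j:p_j=v\}$, and near-disjointness holds precisely because each vertex has at most one incoming arc from a smaller vertex; this is the graph characterization already recorded at the end of the linear-representation discussion in Section 3, so it is a one-line remark rather than a gap. What your route buys is the refined per-$i$ closed form $\sum_{\tau\in\mathcal{L}_{n,i}}a^{one(\tau)}b^{os(\tau)-1}=(a+b)_{n-3}(a+i-3)(b+n-i)$, which is stronger than the lemma and makes the symmetry of the answer visible, plus a free rederivation of \eqref{eq:Ln}; what the paper's deletion bijections buy is a template that transfers to the type $B$ setting of Sections 5--6, where the analogous coordinates no longer decouple so cleanly.
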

 \begin{proof}
For $3\leq i\leq n$, let $\mathcal{M}_{n,i}$ denote  the set of linked partitions of $[n]$  satisfying conditions $(a2)$ and $(a4)$. Denote  by $N_{n,i}$   the set of linked partitions of $[n]$ in which  both vertex $i-1$ and vertex $i$ are destinations. It is easily seen that
\begin{equation}\label{tau1}
  \sum_{i=3}^n\sum_{\tau\in \mathcal{L}_{n,i}}a^{one(\tau)}b^{os(\tau)-1}=\sum_{i=3}^{n}\sum_{\tau\in \mathcal{M}_{n,i}}a^{one(\tau)}b^{os(\tau)-1}-\sum_{i=3}^n\sum_{\tau\in \mathcal{N}_{n,i}}a^{one(\tau)}b^{os(\tau)-1}.
 \end{equation}

 For any $\tau\in \mathcal{M}_{n,i} $, we can obtain a linked partition $\tau'\in \mathcal{L}_{n-1}$ by removing vertex $i$ and the arc whose right-hand endpoint is vertex $i$, and  relabeling the rest vertices by $1,2\ldots, n-1$. Conversely, given a linked partition $\tau'\in \mathcal{L}_{n-1}$, we can get a linked partition $\tau\in \mathcal{M}_{n,i} $  by inserting  a vertex immediately after vertex $i-1$, relabeling the vertices by $1,2\ldots, n$ from left to right,  and adjoining an arc from vertex $j$ to vertex $i$ for some $1\leq j\leq i-2$. It is apparent that $ one(\tau) = one(\tau')+1$ when $j=1$ and $one(\tau) = one(\tau')$ otherwise.  Moreover, we have $os(\tau)=os(\tau')$. Hence,  for    $3\leq i\leq n$, we have
  \begin{equation}\label{tau2}
   \sum_{\tau\in \mathcal{M}_{n,i}}a^{one(\tau)}b^{os(\tau)-1} =\sum_{\tau'\in \mathcal{L}_{n-1}}(i-3+a)\cdot a^{one(\tau')}b^{os(\tau')-1}=(i-3+a)\cdot L_{n-1}(a,b).
 \end{equation}
 Summing over $i$ from $3$ to $n$, we deduce that
 \begin{equation}\label{tau4}
 \begin{array}{lll}
 \sum_{i=3}^{n}\sum_{\tau\in \mathcal{M}_{n,i}}a^{one(\tau)}b^{os(\tau)-1}&= & \sum_{i=3}^{n}(i-3+a)\cdot L_{n-1}(a,b)\\
 &=& ({n-2\choose 2}+(n-2)a)\cdot L_{n-1}(a,b).
 \end{array}
 \end{equation}
 From (\ref{eq:Ln}) we have $L_{n-1}(a,b)=(a+b+n-3)L_{n-2}(a,b)$.
 Substituting it into the above formula, we get
 \begin{equation}\label{tau6}
 \sum_{i=3}^{n}\sum_{\tau\in \mathcal{M}_{n,i}}a^{one(\tau)}b^{os(\tau)-1}=\big({n-2\choose 2}+(n-2)a\big )\cdot (a+b+n-3)\cdot L_{n-2}(a,b).
 \end{equation}

 Let   $3\leq i\leq n$. For any $\tau\in \mathcal{N}_{n,i} $, we can obtain a linked partition $\tau'\in \mathcal{L}_{n-2}$ by removing vertices $i-1$ and $i$ and the arcs whose right-hand points are vertices $i-1$ and $i$, and  relabeling the vertices by $1,2\ldots, n-2$. Conversely, given a linked partition $\tau'\in \mathcal{L}_{n-2}$, we can get a linked partition $\tau\in \mathcal{N}_{n,i} $  by inserting  two vertices  immediately after vertex $i-2$, relabeling the vertices by $1,2\ldots, n$,  and adjoining an arc from vertex $j_1$ (resp. $j_2$) to vertex $i-1$   (resp. vertex  $i$) for some $1\leq j_1, j_2\leq i-2$. It is apparent that
 $$
 one(\tau)=\left\{
  \begin{array}{ll}
  one(\tau')+2 &  \,\,  \mbox{if}\,  j_1=j_2=1 ;\\
 one(\tau')+1 &\,\, \mbox{if either }\,  j_1=1\, \mbox{or } j_2=1, \mbox{but not   both};\\
    one(\tau')&\,\,  \mbox{if} \, j_1, j_2\neq 1 .
  \end{array}
  \right.
  $$
  Moreover, we have $os(\tau)=os(\tau')$.
    Hence,  for  $3\leq i\leq n$, we have
  \begin{equation}\label{tau3}
  \begin{array}{lll}
   \sum_{\tau\in \mathcal{N}_{n,i}}a^{one(\tau)}b^{os(\tau)-1}&=&\sum_{\tau'\in \mathcal{L}_{n-2}}\big((i-3)^2+2a(i-3)+a^2\big)\cdot a^{one(\tau')}b^{os(\tau')-1}\\
   &=&((i-3)^2+2a(i-3)+a^2) L_{n-2}(a,b).
   \end{array}
 \end{equation}

 Summing over $i$ from $3$ to $n$, we deduce that
 \begin{equation}\label{tau7}
 \begin{array}{lll}
 \sum_{i=3}^{n}\sum_{\tau\in \mathcal{N}_{n,i}}a^{one(\tau)}b^{os(\tau)-1}&= & \sum_{i=3}^{n}((i-3)^2+2a(i-3)+a^2)\cdot  L_{n-2}(a,b)\\
 &=& \big({(n-3)(n-2)(2n-5)\over 6}+(n-2)a^2+2{n-2\choose 2}a\big)\cdot L_{n-2}(a,b).
 \end{array}
 \end{equation}

 Combining  (\ref{tau1}),  (\ref{tau6}) and  (\ref{tau7}) and by simple computation,  we are led to  (\ref{weightp}). This completes the proof.  \end{proof}

Now we are in the position to complete the proof of Conjecture \ref{con1}.\\
 {\noindent\bf  Proof of Conjecture \ref{con1}.} For $n\geq 3$, we have
$$
\begin{array}{lll}
\sum_{T\in \mathcal{T}_n}noc(T)a^{top(T)}b^{left(T)}&=& \sum_{T\in \mathcal{AT}^{*}_n}noc(T)a^{top(T)}b^{urr(T)-1}\,\,\,   (\mbox{by   (\ref{eq2.2})})\\
&=& \sum_{i=2}^{n}\sum_{T\in \mathcal{AT}^{*}_{n,i}} a^{top(T)}b^{urr(T)-1} \end{array}.$$
Recall that $\mathcal{AT}^{*}_{n,2}=\emptyset$. This yields that
$$
\sum_{T\in \mathcal{T}_n}noc(T)a^{top(T)}b^{left(T)}= \sum_{i=3}^{n}\sum_{\tau \in \mathcal{L}_{n,i}}a^{one(\tau)}b^{os(\tau)-1}\,\,\, (\mbox{by   (\ref{tau8})}).
 $$
 By Lemma \ref{lemweightp}, we have
 $$
   \sum_{T\in \mathcal{T}_n}noc(T)a^{top(T)}b^{left(T)} =\big[ (n-2)ab+{n-2\choose 2}(a+b)+{n-2\choose 3}\big]\cdot  L_{n-2}(a,b).
   $$
   Recall that $L_n(a,b)=(a+b)_{n-1}=T_{n}(a,b)$. This  yields that$$
   \sum_{T\in \mathcal{T}_n}noc(T)a^{top(T)}b^{left(T)}=\big[ (n-2)ab+{n-2\choose 2}(a+b)+{n-2\choose 3}\big]\cdot  T_{n-2}(a,b)
   $$ as desired, which completes the proof.

\section{A bijection between $\mathcal{AT}^B_n$ and $\mathcal{L}^B_n$  }
\label{sec:lb}
 In this section, we  shall establish  a bijection between  type $B$ alternative tableaux of size $n$ and  type $B$ linked partitions of $[n]$.
  First we describe a map $\Phi_B : \mathcal{AT}^{B}_n \rightarrow\mathcal{L}^{B}_n$.
Let $T$ be a type $B$  alternative tableau of size $n$.   We construct a type $B$ linked partition $\tau=\Phi_B(T)$ as follows. For all $j\in [n]$, if  $j$ is the label of a column, then let   $j$  be the label of a vertex of $\tau$ if column $j$ contains an $\uparrow$, and let $-j$ be the label of a vertex of $\tau$, otherwise. If $j$ is the label of a row, then let $j$ be the label of  a vertex of $\tau$.
 For column $j$ with an $\uparrow$,
 suppose that the cell  $(i_1, j)$ is  filled with an $\uparrow$, and the  cells $(i_2,j)$, $(i_3, j)$, $\ldots, $ $(i_t, j)$
 with $i_2<i_3<\ldots<i_t$
 are  cells filled with $\leftarrow's$. For $\ell=1,2,\ldots, t$, let $(i_{\ell}, i_{\ell+1})$ be an arc in $\tau$, where $i_{t+1}=j$.
  For non-empty column $j$ without any $\uparrow's$, suppose that the cells  $(i_1,j)$, $(i_2, j)$, $\ldots, $ $(i_t, j)$
   with $i_1<i_2<\ldots<i_t$
   are  filled with $\leftarrow's$. For $\ell=1,2,\ldots, t$, let $(-j, i_{\ell})$ be an arc in $\tau$. As an example, for the type $B$ alternative tableau given in Figure \ref{alternative}, its corresponding type $B$ linked partition is shown in Figure \ref{bijection2}.

   It is not difficult to check that for all $i\in [n]$, either vertex   $i$ or vertex  $-i$ is  contained in $\tau$, but not both. Notice that there is at most one $\leftarrow$ in each row and at most one $\uparrow$ in each column. Thus, for any vertex $j$ in $\tau$, there is at most one arc whose right-hand endpoint is $j$. In order to show that the map $\Phi_B$  is well defined, that is, $\Phi_B(T)\in \mathcal{L}^{B}_n$, it remains to show that   for $\ell=1,2,\ldots, t$,   vertex $i_{\ell}$ is contained in $\tau$. If $i_{\ell}>0$, vertex $i_{\ell}$ is contained in $\tau$ since $i_{\ell}$ is the label of a row of $T$. If $i_{\ell}<0$, then it follows from the definition of type $B$ alternative tableaux that  column $|i_{\ell}|$ contains no $\uparrow$. By the definition of the map $\Phi_B$,   vertex $i_{\ell}$ is contained in $\tau$. Hence, the map $\Phi_{B}$ is well defined.

\begin{figure}[h!]
\begin{center}
\begin{picture}(100,100)
\setlength{\unitlength}{1.5mm}

  \put(0,0){\circle*{0.5}}\put(5,0){\circle*{0.5}}\put(10,0){\circle*{0.5}}
   \put(15,0){\circle*{0.5}}\put(20,0){\circle*{0.5}}\put(25,0){\circle*{0.5}}
  \put(30,0){\circle*{0.5}}\put(35,0){\circle*{0.5}}\put(40,0){\circle*{0.5}}
  \put(45,0){\circle*{0.5}}

\qbezier(0,0)(2.5,2.5)(5,0) \qbezier(0,0)(5,5)(10,0) \qbezier(0,0)(12.5,12)(25,0)\qbezier(5,0)(10,5)(15,0)
\qbezier(10,0)(15,5)(20,0)\qbezier(10,0)(22.5,15)(35,0)
\qbezier(20,0)(25,5)(30,0)
  \qbezier(35,0)(37.5,2.5)(40,0)

  \put(-2,-2.5){$-7$}\put(3,-2.5){$-4$}\put(8,-2.5){$-2$}\put(13,-2.5){$-1$}
  \put(19,-2.5){$3$}\put(24,-2.5){$5$}\put(29,-2.5){$6$}\put(34,-2.5){$8$}
  \put(39,-2.5){$9$}\put(44,-2.5){$10$}
  \end{picture}
\end{center}
\caption{The type $B$ linked partition corresponding to the type $B$ alternative tableau given in Figure \ref{alternative}.  }\label{bijection2}
\end{figure}
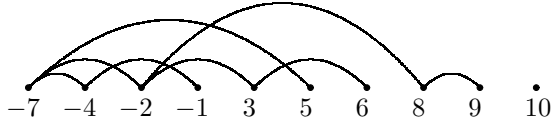

The map $\Phi_B: \mathcal{AT}^B_n\rightarrow \mathcal{L}^B_n$ is a generalization of the map $\Phi: \mathcal{AT}^*_n\rightarrow \mathcal{L}_n$ in the sense that for any  $T\in \mathcal{AT}^B_n $,  if all the cells in a row with a negative label are empty and each column is filled with an $\uparrow$, we can consider $T$ as an element in $\mathcal{AT}^*_n$ and the map $\Phi_B$ on such $T$ is identical with $\Phi$.

 In order to show that the map $\Phi_B$ is a bijection, we present a map $\Psi_B:\mathcal{L}^{B}_n\rightarrow \mathcal{AT}^{B}_n$.
  To this end, we need the following definitions.  Let   $\tau$ be a type $B$ linked partition of $[n]$.
   Suppose that $(i,j)$ is an arc of $\tau$ and $j$ is a destination of $\tau$.
    We say that $j$ is a {\em legal} destination if $|i|<|j|$. Otherwise, it is said to be {\em illegal}. As an example, in the linked partition given in  Figure \ref{bijection2},  vertex $6$ is a  legal destination, while vertex $5$ is an illegal destination.   Notice that if vertex $j$ is a legal destination, we must have $j>0$.  A
    path $(i_1, i_2, \ldots, i_t, i)$  with $i_1<i_2<\ldots<i_t<i$  are said to be a {\em good} path with   destination $i$ if we have $|i_j|<|i|$ for all $1\leq j\leq t$.  When we  choose $i_1$ to be the least such integer, the resulting path is said to be a {\em maximal} good path  with  destination $i$. For example, in the linked partition given in Figure \ref{bijection2},  the path $(-2,3, 6)$ is the maximal good path  with destination $6$.

   Now we  are ready  to  describe the map $\Psi_B$.  Given a type $B$ linked partition $\tau$ of $[n]$, let $F$ be a Ferrers diagram of size $n$ such that  $|i|$ is  the label of a column of $F$  if and only if  either $i<0$  or  $i$ is a  legal destination. Let $\bar{F}$ be the shifted Ferrers diagram of $F$.
    Suppose that $\tau$ has exactly $k$ legal destinations, which are  vertices $b_1, b_2, \ldots, b_k$ with $b_1>b_2>\ldots >b_k$. We set $T^{(0)}$ to be the empty filling of $\bar{F}$ and $\tau^{(0)}=\tau$. For $m$ from $1$ to $k$, we define $T^{(m)}$ to be a   tableau obtained  from $T^{(m-1)}$ by filling the cells in column $b_m$ as follows.
       Suppose that
     the maximal good path with   destination $b_m$  in $\tau^{(m-1)}$  is $(a_1, a_2, \ldots, a_t,  b_m)$. Then we fill the cell  $(a_1,b_m)$ with an $\uparrow$. For all $2\leq j\leq t$,  fill the cell $(a_j, b_m)$ with a $\leftarrow$.  Let $\tau^{(m)}$ be  the type $B$ linked partition obtained form $\tau^{(m-1)}$ by removing the arcs in the path $(a_1, a_2, \ldots, a_t, b_m)$.
Let $\Psi_B(\tau)$ be the  tableau obtained from $T^{(k)}$ by    filling  the  cell $(j,-i)$ of $F$ with  a $\leftarrow$ if and only if $(i,j)$ is  an arc of $\tau^{(k)}$.

\begin{lemma}
The map $\Psi_B$ is well defined, that is,
  we have $\Psi_B(\tau)\in \mathcal{AT}^{B}_n$ for any $\tau\in \mathcal{L}^{B}_n$.
\end{lemma}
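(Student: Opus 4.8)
The plan is to verify directly that $\Psi_B(\tau)$ satisfies the three defining conditions of a type $B$ alternative tableau, treating separately the cells created in the first phase (filling the columns $b_1,\dots,b_k$ of the legal destinations via maximal good paths) and those created in the last phase (the arrows placed at cells $(j,-i)$ coming from the surviving arcs of $\tau^{(k)}$).

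First I would establish the structural fact that makes the last phase meaningful, namely that \emph{every arc $(i,j)$ surviving in $\tau^{(k)}$ has $i<0$}. Suppose not, and among all surviving arcs with positive left-hand endpoint pick one, say $(i,j)$, whose right-hand endpoint $j$ is largest. Since $i<j$ and $i>0$ we have $j>0$ and $|i|<|j|$. If $j$ were a destination it would be a legal destination, hence equal to some $b_m$; when $b_m$ is processed the unique arc ending at $j$, namely $(i,j)$, lies on the maximal good path with destination $b_m$ (its left endpoint satisfies $|i|<|b_m|$), so it is removed, a contradiction. Otherwise $j$ is a transient, so it is the left-hand endpoint of an arc $(j,j')$; by maximality of $j$ this arc does not survive, hence it lies on the maximal good path with destination some legal destination $b_p$, so that $|j|<|b_p|$. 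Because $(i,j)$ is still present at step $p$ and $|i|<|j|<|b_p|$, the maximal good path with destination $b_p$ must extend leftward through $j$ to $i$, so $(i,j)$ is deleted at step $p$, again a contradiction. This proves the claim. Consequently, for each surviving arc $(i,j)$ the label $-i=|i|$ is genuinely a column label of $F$, and $j$ is a row label (a legal destination would have absorbed the unique arc ending at $j$), so the cell $(j,-i)$ exists and the last phase is well defined.

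Next I would dispatch conditions (2) and (3), which are comparatively light. For (3), a first-phase entry sits in a cell $(a_\ell,b_m)$ with $|a_\ell|<|b_m|$ by the good-path inequality, so it lies in an actual, non-diagonal cell of $\bar F$; a last-phase entry sits in $(j,-i)$ with $i\neq j$, so it too is off the diagonal. For (2), a column acquires an $\uparrow$ only when its label is a legal destination, i.e.\ a \emph{positive} vertex $i$ of $\tau$; then $-i$ is not a vertex, so the added row $-i$ carries no vertex. Since every cell filled by $\Psi_B$ lies in a row indexed by an actual vertex (a path vertex $a_\ell$ in the first phase, or the right-hand endpoint $j$ of a surviving arc in the last phase), the row $-i$ is empty, which is precisely condition (2).

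The main work, and the expected obstacle, is condition (1): the cells above an $\uparrow$ and to the left of a $\leftarrow$ must be empty. This is the type $B$ analogue of the claim proved inside Theorem~\ref{thphi}, and I would run the same contradiction scheme, using that each vertex is the right-hand endpoint of at most one arc, that processing is carried out in decreasing order of the $b_m$, and that an arc once deleted stays deleted. An $\uparrow$ occupies the topmost filled cell of its column, and that column (a legal destination $b_m$, for which $-b_m$ is not a vertex) is touched only while $b_m$ is processed, so nothing lies above it. For a $\leftarrow$ at $(a_s,b_m)$, a non-empty cell to its left would be an entry $(a_s,c)$ with $c>b_m$; tracing when $(a_s,c)$ was created and invoking uniqueness of the arc ending at $a_s$ forces the arc $(a_{s-1},a_s)$ used at step $m$ to have been deleted earlier, the contradiction. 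The one genuinely delicate case is when $(a_s,c)$ is the \emph{up}-arrow of an earlier, larger column $b_p=c$, so that $a_s$ would be the origin of the good path to $b_p$; here the good-path inequalities $|a_{s-1}|<|b_m|<|b_p|$ show the arc $(a_{s-1},a_s)$ was present and admissible at step $p$, so the maximal good path with destination $b_p$ would have extended through $a_s$ to $a_{s-1}$, contradicting that $a_s$ was chosen as its origin. The identical uniqueness-plus-maximality argument handles the leftward cells of the last-phase arrows $(j,-i)$, completing the verification of (1) and hence showing $\Psi_B(\tau)\in\mathcal{AT}^{B}_n$.
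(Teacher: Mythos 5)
Your overall architecture is the same as the paper's: a structural claim about the arcs surviving in $\tau^{(k)}$ to make the last phase meaningful, then verification of the three defining conditions using uniqueness of the incoming arc, the decreasing processing order $b_1>\cdots>b_k$, and maximality of good paths. Your handling of conditions (2), (3) and of condition (1) is correct, and in the delicate up-arrow case (where the earlier cell $(a_s,b_p)$ is the $\uparrow$ of column $b_p$, so that $a_s$ was chosen as the origin of that good path) you are actually more explicit than the paper, which lumps both cases into one sentence. However, there is a genuine gap in your feasibility step. Your structural fact --- every surviving arc $(i,j)$ has $i<0$ --- is strictly weaker than what the last phase needs. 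In a shifted Ferrers diagram, knowing that $|i|$ is a column label and $j$ is a row label does \emph{not} imply that the cell $(j,-i)$ exists: for $j>0$ the cell $(j,|i|)$ lies in $\bar{F}$ if and only if $|i|>j$, since a positive row $j$ meets exactly the columns whose labels exceed $j$. Your parenthetical remark rules out $j$ being a legal destination, and an illegal destination automatically satisfies $|i|>|j|$; but when $j>0$ is a \emph{transient}, nothing you proved excludes a surviving arc $(i,j)$ with $i<0$ and $|i|<j$ (schematically, an arc $(-2,8)$ together with $(8,9)$), and for such an arc the purported cell $(j,-i)$ is simply not in the diagram. Your extremal argument ranges only over surviving arcs with \emph{positive} left-hand endpoint, so it never sees this configuration; the sentence ``so the cell $(j,-i)$ exists'' is a non sequitur as written.

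The repair is cheap and uses exactly your own scheme: run the extremal argument over surviving arcs with $|i|<|j|$ (for such an arc $j$ is necessarily positive), choosing one maximizing $|j|$. If $j$ is a destination it is legal, and the unique incoming arc $(i,j)$ is swept up when $j=b_m$ is processed, since $|i|<b_m$; if $j$ is a transient, an outgoing arc $(j,j')$ has $|j|<|j'|$, so by maximality it was deleted on the good path to some $b_p$ with $|j|<b_p$, and since $|i|<|j|<b_p$ the backward walk extends through $j$ and deletes $(i,j)$. This yields the claim the paper actually proves and uses, namely $i<0$ \emph{and} $|i|>|j|$ for every arc of $\tau^{(k)}$, from which existence and non-diagonality of all cells $(j,-i)$ follow in both cases $j>0$ and $j<0$. (Note the paper's printed inequality ``$|j|>|i|$'' is evidently a typo for $|j|<|i|$: its own example has the surviving arc $(-7,5)$ of Figure~\ref{bijection2} placed in row $5$, column $7$ of the tableau.)
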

\begin{proof} We proceed the proof by following the notations in the definition of $\Psi_B$.   First  we shall show that the map $\Psi_B$ is feasible. To this end,  it suffices to show that  (1) for all $1\leq i\leq t$, the cell $(a_i, b_m)$ is contained in $\bar{F}$  and (2) the cell $(j,-i)$ is contained in $\bar{F}$ whenever $\tau^{(k)}$ contains  the arc $(i,j)$. From the construction of $\bar{F}$, it is not difficult to check that $b_m$ is the label of a column and $a_i$ is the label of a row in $\bar{F}$.  By the definition of the maximal good path with destination $b_m$, we have $|a_i|<|b_m|=b_m$. Hence $\bar{F}$ contains the cell $(a_i, b_m)$.

To prove (2), we claim that if $(i,j)$ is an arc in $\tau^{(k)}$, then we have $i<0$ and $|j|>|i|$. If not, the arc $(i,j)$ will be contained in  the maximal good path of a legal destination of $\tau$. This contradicts the definition  of $\Psi_B$. Hence, the claim is proved.  This implies that  $-i$ is the label of a column in $\bar{F}$.  By the construction of $\bar{F}$, we have that  $j$ is the label of a row. Hence, the cell $(j,-i)$ is contained in $F$.

 Next we aim to show that $\Psi_B(\tau)\in \mathcal{AT}^{B}_n$.  It is apparent that  the diagonal cells of $F$ are empty in $\Psi_B(\tau)$.
In order to show that $\Psi_B(\tau)\in \mathcal{AT}^{B}_n$, it remains  to show
that $\Psi_B(\tau)$ verifies the following properties:
\begin{itemize}
\item[{\upshape (\rmnum{1})}]  all the cells pointed by an arrow  are empty;
\item[{\upshape (\rmnum{2})}]  if column $i$ contains an up arrow  $\uparrow$, then row $-i$ is empty.
\end{itemize}
 We claim that    the cells pointed by an arrow are empty in $T^{(m)}$ for all $0\leq m\leq k$.  If not, suppose that the cell $(a_s, b_p)$ is non-empty in $T^{(m)}$ for some $2\leq s\leq t$ and $p< m$. Then the arc $(a_{s-1}, a_s)$ would be in the maximal good path  with destination $b_p$ since $|a_{s-1}|< b_m < b_p $.   According to the construction  of $\tau^{(p)}$, the arc  $(a_{s-1}, a_s)$ is not contained in $\tau^{(j)}$ for all $j\geq p$.  This contradicts  the fact that $\tau^{(m-1)}$ contains  the arc $(a_{s-1}, a_s)$.
Hence, the claim is proved.

In order to prove that  $\Psi_B(\tau)$ verifies property (\rmnum{1}), it remains  to show that if $(i,j)$ is  an arc in $\tau^{(k)}$, then there exists no arrow which  is pointed by the  $\leftarrow$  filled in the cell $(j,-i)$. If not,    it follows from the construction of $\Psi_B(\tau)$  that there exists a cell $(j,b_\ell)$ with $1\leq \ell\leq k$   which is  filled with an arrow. Since $b_\ell>|i|$, the arc $(i,j)$ would be contained in the maximal good path with destination $b_\ell$, which is impossible. Thus, we have concluded that $\Psi_B(\tau)$ verifies property (\rmnum{1}).

From the construction of $\Psi_B(\tau)$, it is easily seen that if column $i$ is filled with an $\uparrow$, then vertex $i$ is a legal destination in $\tau$. This implies that    $-i$ is not a vertex of $\tau$.   Hence, row $-i$ is empty, that is, $\Psi_B(\tau)$ verifies property  (\rmnum{2}). This completes the proof. \end{proof}

\begin{lemma}\label{inverse}
Following the notations in the definition of $\Psi_B$, we have $\Phi_B(T)=\tau$, if $\Psi_B(\tau)=T$.
\end{lemma}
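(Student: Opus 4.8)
The plan is to prove that $\Phi_B\circ\Psi_B$ is the identity by showing that, for $T=\Psi_B(\tau)$, the type $B$ linked partition $\Phi_B(T)$ has exactly the same signed vertex set and exactly the same arc set as $\tau$. Since a type $B$ linked partition is completely determined by these two pieces of data, this will suffice.

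First I would check that the signed vertices agree. From the construction of $\bar{F}$, a label $|i|$ indexes a column precisely when $i<0$ or $i$ is a legal destination, and every remaining positive label indexes an ordinary row. During $\Psi_B$ the symbol $\uparrow$ is placed only in the columns $b_1,\dots,b_k$ indexed by the legal destinations, each receiving an $\uparrow$ from its maximal good path (which is nonempty, since a destination carries an incoming arc), while the terminal $\leftarrow$-filling step introduces no $\uparrow$. Hence a column of $T$ carries an $\uparrow$ if and only if its label is a legal destination. Running the vertex rule of $\Phi_B$: a legal-destination column yields the positive vertex; a column whose label $c$ comes from a negative vertex $-c$ of $\tau$ carries no $\uparrow$ and yields $-c$; and a positive label that is not a legal destination indexes a row and yields the positive vertex. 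In every case the emitted sign matches the one that label carries in $\tau$.

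Second, and this is the heart of the argument, I would match the arcs. By construction $\Psi_B$ decomposes the arc set of $\tau$ as the disjoint union of the arc sets of the maximal good paths $P_1,\dots,P_k$ removed at steps $1,\dots,k$, together with the residual arcs of $\tau^{(k)}$; by the claim established in the previous lemma every residual arc $(i,j)$ satisfies $i<0$. Correspondingly, the arrows of $T$ split into those placed in the legal-destination columns and the $\leftarrow$'s placed in the negative-vertex columns, and the two families of columns are disjoint (a label cannot be both a positive legal destination and a label produced by a negative vertex). I would then verify the reconstruction column by column. For a legal-destination column $b_m$, write $P_m=(a_1,\dots,a_t,b_m)$ with $a_1<\cdots<a_t<b_m$; the content of column $b_m$ in the finished tableau is exactly what was written at step $m$, namely an $\uparrow$ at $a_1$ and $\leftarrow$'s at $a_2<\cdots<a_t$, so reading it with $\Phi_B$ returns exactly the arcs $(a_1,a_2),\dots,(a_{t-1},a_t),(a_t,b_m)$ of $P_m$, the increasing order of the path guaranteeing that $\Phi_B$'s row ordering reproduces the path. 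For a negative-vertex column $c$ with $-c$ a vertex of $\tau$, the $\leftarrow$'s sit in precisely the rows $j$ for which $(-c,j)$ is a residual arc, and $\Phi_B$ reads them back as exactly those arcs. Taking the union over all columns recovers $\bigsqcup_m P_m\sqcup(\text{arcs of }\tau^{(k)})$, which is the arc set of $\tau$.

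Finally, the step I expect to be the main obstacle is making the column-by-column reconstruction airtight, i.e. being sure that the pattern read off column $b_m$ of the completed tableau really is the path $P_m$ used at step $m$ and has not been corrupted by later activity. This rests on two points I would state carefully: that column $b_m$ is written once and never revisited (later path-steps act on the distinct columns $b_{m+1},\dots,b_k$, and the terminal $\leftarrow$-step acts only on the negative-vertex columns, disjoint from the $b_m$); and that a legal destination, being a \emph{pure} right endpoint, can occur only as the terminus of a good path and never as an interior vertex of another destination's path, so that no incoming arc of a legal destination is deleted before that destination is processed. With these two observations the decomposition of $\tau$'s arcs matches the arrows of $T$ bijectively, and $\Phi_B(T)=\tau$ follows.
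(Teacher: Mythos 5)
Your proposal is correct and follows essentially the same route as the paper: both arguments reduce the identity $\Phi_B(\Psi_B(\tau))=\tau$ to checking that the signed vertex set and the arc set are recovered, using the vertex-labelling rules of $\Psi_B$ and the residual-arc property $i<0$ for arcs $(i,j)$ of $\tau^{(k)}$ from the well-definedness lemma. The only difference is one of emphasis: the paper dismisses the arc-matching as ``easily seen'' and details only the vertex sets, whereas you carefully justify the arc-matching (columns written once, legal destinations never interior to a good path, disjointness of the two column families), which fills in precisely what the paper leaves implicit.
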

\begin{proof} From the definition of the map $\Phi_B$, it is easily seen that $\Phi_B(T)$ has the same set of arcs as that of $\tau$. In order to show that $\Phi_B(T)=\tau$, it remains show that $\Phi_B(T)$ has the same set of vertices as that of $\tau$. By the definition of the map $\Psi_B$,  for all $j\in [n]$,
   vertex $j$ is contained in $\tau$ if and only if either $j$ is the label of a column   containing an $\uparrow$  or $j$ is the label of a row in $T$. Moreover, vertex $-j$ is contained in $\tau$  if and only if   $j$ is the label of  a column without any $\uparrow's$ in $T$. Thus, by the definition of the map $\Phi_B$, it follows that $\Phi_B(T)$ has the same set of vertices as that of $\tau$. This completes the proof. \end{proof}

By Lemma \ref{inverse}, we see that the map $\Psi_B$ is the inverse map of $\Phi_B$. Thus, the map $\Phi_B$ is indeed a bijection. From the definition of the map $\Phi_B$, one can easily verify that the bijection $\Phi_B$ satisfies  the following properties, which are analogous to $\Phi$.

\begin{theorem}\label{psi}
For any $T\in \mathcal{AT}^{B}_n$ and $\tau\in \mathcal{L}^{B}_n$ with $\Phi_B(T)=\tau$, we have the following properties.
\begin{itemize}
\item[1.] For all $i\in [n]$, $i$ is the label of a column of $T$ if and only if either vertex $i$ is a legal destination of $\tau$, or vertex $-i$ is contained in $\tau$.
            \item[2.] For all $i\in [n]$,  $i$ is the label of a row of $T$
            if and only if vertex $i$ is contained in $\tau$ and is not a legal destination of $\tau$.
\item[3.] The labels of the unrestricted rows of $T$ are exactly the labels of the origins and singletons of $\tau$.

\end{itemize}
\end{theorem}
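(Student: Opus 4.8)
The plan is to derive all three properties directly from the definitions of $\Phi_B$ and of its inverse $\Psi_B$ (Lemma \ref{inverse}), together with two bookkeeping facts that are already implicit in the construction and which I would record at the outset. The first is the observation, noted just after the definition of $\Phi_B$, that for each $i\in[n]$ exactly one of the vertices $i,-i$ belongs to $\tau$. The second, read off the arc-construction of $\Phi_B$, is that a vertex $v$ of $\tau$ is a right-hand endpoint precisely when $v$ is a legal destination of $\tau$ or the row of $T$ with label $v$ contains a $\leftarrow$: indeed, in the construction the only arcs pointing \emph{into} a vertex are the arcs $(i_{\ell},i_{\ell+1})$ ending at a $\leftarrow$-row or at the destination $j$ of a $\uparrow$-column, and the arcs $(-c,i_\ell)$ ending at a $\leftarrow$-row of a $\uparrow$-free column $c$.

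For the first property I would invoke Lemma \ref{inverse}, so that $T=\Psi_B(\tau)$, and simply read the column labels off the definition of $\Psi_B$: there $|i|$ is declared to be the label of a column of the underlying shifted diagram exactly when $i<0$ or $i$ is a legal destination of $\tau$. Translating ``$i<0$'' into ``$-i\in\tau$'' yields that $i\in[n]$ is a column label of $T$ if and only if $-i\in\tau$ or $i$ is a legal destination, which is the first property. The second property is then pure complementation: $i$ is a row label iff it is not a column label, and since exactly one of $i,-i$ lies in $\tau$, the condition ``$-i\notin\tau$'' coincides with ``$i\in\tau$''; combined with ``$i$ is not a legal destination'' this is exactly the second property.

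For the third property I would use that the origins and singletons of $\tau$ are precisely the vertices that are \emph{not} right-hand endpoints (whereas transients and destinations are). By the crux fact above, a non-legal-destination vertex $v$ fails to be a right-hand endpoint iff the row of $T$ labeled $v$ carries no $\leftarrow$, and I would match this with the definition of an unrestricted row, splitting on the sign of $v$. If $v>0$, then by the second property $v$ labels a genuine row and is not a column label, so the clause ``column $|v|$ has no $\uparrow$'' is vacuous and unrestrictedness reduces to ``row $v$ has no $\leftarrow$''; if $v=-i<0$, then $-i\in\tau$ forces column $i$ to contain no $\uparrow$, so the added row $-i$ is present and the second clause of unrestrictedness holds automatically, again reducing unrestrictedness to ``row $-i$ has no $\leftarrow$''. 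In both cases unrestricted $\iff$ not a right-hand endpoint $\iff$ origin or singleton, giving the desired label-preserving matching. The step I expect to be the main obstacle is precisely this third property, and within it the careful handling of the dual role of a negative label: the vertex $-i$ is born from a $\uparrow$-free column $i$ yet, as a row label, names the added row $-i$, so I must check that ``no $\uparrow$ in column $i$'' is exactly what places $-i$ in $\tau$ while ``no $\leftarrow$ in row $-i$'' is exactly what keeps $-i$ from being a right-hand endpoint. I would finally remark that the correct orientation of every arc (smaller label on the left), on which this right-endpoint bookkeeping silently rests, is already guaranteed by the legal-destination and good-path machinery used in the well-definedness lemmas for $\Phi_B$ and $\Psi_B$, so no new geometric input is needed.
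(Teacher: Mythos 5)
Your proposal is correct and takes essentially the paper's approach: the paper gives no written proof of Theorem \ref{psi}, asserting only that the properties follow ``from the definition of the map $\Phi_B$,'' and your argument is precisely that verification --- reading the column labels off the definition of $\Psi_B$ (legitimately invoking the inverse relation established via Lemma \ref{inverse}), getting rows by complementation using that exactly one of $i,-i$ is a vertex, and identifying origins/singletons with non-right-endpoints via the observation that legal destinations are exactly the $\uparrow$-columns while every other incoming arc lands on a row containing a $\leftarrow$. Your sign-split for the third property, where ``no $\uparrow$ in column $i$'' is what puts $-i$ in $\tau$ and simultaneously discharges the second clause of unrestrictedness, correctly handles the only genuinely delicate point.
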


For a type $B$ linked partition $\tau$, the {\em weight} of $\tau$ is defined to be  $x^{os(\tau)-1}$, where $os(T)$ is the total  number of origins and singletons  of $\tau$. Define
$$
L_n^B(x)=\sum_{\tau\in \mathcal{L}_n^B}x^{os(\tau)-1}.
$$
Combining  Theorem \ref{psi} and Formula (\ref{eq2.6}), we have
\begin{equation}\label{LB}
L_n^B(x)=\sum_{\tau\in \mathcal{L}_n^B}x^{os(\tau)-1}=\sum_{T\in \mathcal{AT}^{B}_n}x^{urr(T)-1}=2^n(x+1)_{n-1}.
\end{equation}

\section{Proof of Conjecture \ref{con2}}
\label{sec:co2}
This section is devoted to the proof of Conjecture \ref{con2}.
To this end, we need to consider a subset of
type $B$ alternative tableaux of size $n$.
For  $2\leq i\leq n$, denote by $\mathcal{AT}^{B}_{n, i}$ the set of type $B$ alternative tableaux of size $n$   in which the cell $(i-1,i)$ is a non-occupied corner.

From the construction of the map $\Phi_B$ and Lemma \ref{psi}, the map $\Phi_B$ maps  an type $B$ alternative tableau $T\in \mathcal{AT}^{B}_{n, i} $ to a type $B$ linked partition $\tau=\Phi_B(T)$ satisfying the following conditions:
\begin{itemize}
\item[(b1) ] $urr(T)=os(\tau)$;
\item[(b2) ]   either vertex $i$ is a legal destination of $\tau$, or vertex $-i$ is contained in $\tau$;
\item[(b3) ] vertex $i-1$ is contained in $\tau$  and is not a legal destination;
 \item[(b4) ] neither $(i-1, i)$ nor $(-i, i-1)$ is  an arc in $\tau$.
\end{itemize}
For all $2\leq i\leq n$, denote by $\mathcal{L}^{B}_{n,i}$ the set of all type $B$ linked partitions of $[n]$   satisfying   conditions (b2), (b3) and  (b4).
 Thus, the map $\Phi_B$ is a weight preserving bijection between the set $\mathcal{AT}^{B}_{n, i}$ and the set $\mathcal{L}^{B}_{n,i}$. This yields that
 for all $2\leq i\leq n$
 \begin{equation}\label{beq1}
 \sum_{T\in \mathcal{AT}^{B}_{n, i} }x^{urr(T)-1}=\sum_{\tau\in \mathcal{L}^{B}_{n,i}}x^{os(\tau)-1}.
 \end{equation}
In order to compute  the right-hand side of Formula $(\ref{beq1})$, we partition the set $\mathcal{L}^{B}_{n,i}$ into two subsets $\mathcal{X}_{n,i}$ and $\mathcal{Y}_{n,i}$, where $\mathcal{X}_{n,i}$ is the set of type $B$ linked partitions $\tau$ satisfying  conditions $(b3)$ and $(b4)$ in which vertex $-i$ is contained in $\tau$,  and $\mathcal{Y}_{n,i}$ is   the set of type $B$ linked partitions $\tau$ satisfying  conditions $(b3)$ and $(b4)$ in which  vertex $i$ is  a legal destination of  $\tau$.
It is apparent that  for $2\leq i\leq n$, we have
\begin{equation}\label{beq2}
 \sum_{\tau\in \mathcal{L}^{B}_{n,i}}x^{os(\tau)-1}=\sum_{\tau\in \mathcal{X}_{n, i} }x^{os(\tau)-1}+\sum_{\tau\in \mathcal{Y}_{n, i} }x^{os(\tau)-1}.
 \end{equation}

Denote by
$$L_n(x)=L_n(1,x)=\sum_{\tau\in \mathcal{L}_n}x^{os(\tau)-1}.$$
Recall that $L_n(a,b)=\sum_{\tau\in \mathcal{L}_n}a^{one(\tau)}b^{os(\tau)-1}=(a+b)_{n-1}$.
 Hence, we have
\begin{equation}\label{LN}
L_n(x)=(x+1)_{n-1}
\end{equation}

\begin{lemma}
For $n\geq 2$ and $2\leq i\leq n$, we have
\begin{equation}\label{beq3}
 \sum_{\tau\in \mathcal{X}_{n,i}}w(\tau)=2^{n-2}L_{n}(x)-(i-1)2^{n-2}L_{n-1}(x).
\end{equation}

\end{lemma}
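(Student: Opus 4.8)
The plan is to compute the sum entirely on the linked--partition side, exploiting the observation that the statistic $os(\tau)$, and hence the weight $w(\tau)=x^{os(\tau)-1}$, depends only on the arc structure of $\tau$ and not on the signs of its vertices. Indeed, $os(\tau)$ equals the number of vertices that are not the right--hand endpoint of any arc, and this number is determined purely by the forest of arcs on the $n$ linearly ordered vertices. Consequently, if I fix the signs of two prescribed values and let the remaining $n-2$ signs vary, the weight sum over each of the $2^{n-2}$ sign patterns is identical and equals the sum of $x^{os-1}$ over all linked--partition structures on $n$ ordered vertices, which is $L_n(x)$. I would isolate this as a preliminary lemma. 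It immediately gives $\sum_{\tau}w(\tau)=2^{n-2}L_n(x)$ over the family $\mathcal S_{n,i}$ of all $\tau\in\mathcal L^B_n$ having both $-i$ and $i-1$ as vertices, and likewise $2^{n-2}L_{n-1}(x)$ over the analogous family on $n-1$ values with $i$ negated.

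Next I would write $\mathcal X_{n,i}=\mathcal S_{n,i}\setminus(\mathcal E_1\cup\mathcal E_2)$, where $\mathcal E_1$ is the set of $\tau\in\mathcal S_{n,i}$ in which $i-1$ is a legal destination, and $\mathcal E_2$ is the set in which the corner cell $(i-1,i)$ is occupied (the failure of (b4)). These are disjoint, since the parent of a legal destination $i-1$ has absolute value less than $i-1$, whereas the arrow occupying the corner comes from the vertex $-i$; so the three weight sums subtract cleanly. For $\mathcal E_1$ the bijection is the easy one: a legal destination $i-1$ is a leaf whose parent is exactly one of the $i-2$ vertices of absolute value $1,\dots,i-2$ (one vertex per absolute value, regardless of sign), and deleting this leaf with its incoming arc is weight preserving (a destination is never counted by $os$) and produces an arbitrary type $B$ linked partition on the remaining $n-1$ values with $i$ still negated. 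This yields $\sum_{\tau\in\mathcal E_1}w(\tau)=(i-2)\,2^{n-2}L_{n-1}(x)$.

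The crux — and the step I expect to be the main obstacle — is $\sum_{\tau\in\mathcal E_2}w(\tau)=2^{n-2}L_{n-1}(x)$. Two difficulties appear here. First, $i-1$ need not be a leaf: it may itself be a left--hand endpoint of further arcs, so a naive deletion of $i-1$ fails to be invertible, because after merging, the children of $i-1$ cannot be distinguished from the other children of $-i$. Second, the mere presence of the arc $(-i,i-1)$ does not force the corner to be occupied, since that arc can be absorbed into a maximal good path climbing to a legal destination larger than $i$; the occupied--corner condition is exactly that $(-i,i-1)$ survives as a \emph{leftover} arc after the good--path decomposition. I would therefore characterise $\mathcal E_2$ by the condition that the corner cell $(i-1,i)$ carries a $\leftarrow$, and set up the operation that removes this corner cell — equivalently, removes the leftover arc $(-i,i-1)$ together with the value $i-1$, reattaching any children of $i-1$ to $-i$ by the canonical rule supplied by the good--path ordering. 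The points to verify are that this is a weight--preserving bijection onto the type $B$ linked partitions on the remaining $n-1$ values with $i$ negated, and that it is genuinely invertible; the good--path structure is what furnishes the canonical redistribution of children that the bare forest does not.

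Finally, combining the three evaluations gives
\[
\sum_{\tau\in\mathcal X_{n,i}}w(\tau)=2^{n-2}L_n(x)-\big[(i-2)+1\big]\,2^{n-2}L_{n-1}(x)=2^{n-2}L_n(x)-(i-1)\,2^{n-2}L_{n-1}(x),
\]
as desired. As a safeguard I would check the extreme cases $i=2$ and $i=n$, where $i-1$ is forced to be a leaf (for $i=n$ the value $i-1$ sits rightmost once $i$ is negated), so that $\mathcal E_2$ collapses to the easy leaf--deletion and the bijection above is immediate; these cases both pin down the constants and confirm that the subtle absorption phenomenon is exactly what accounts for the coefficient $(i-1)$ rather than $(i-2)$.
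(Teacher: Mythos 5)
Your decomposition is, at the skeleton level, the paper's own: your $\mathcal{S}_{n,i}$ is its $\mathcal{X}^{(1)}_{n,i}$ (same sign-stripping argument giving $2^{n-2}L_n(x)$), and your $\mathcal{E}_1$ is its $\mathcal{X}^{(2)}_{n,i}$ (same leaf deletion giving $(i-2)2^{n-2}L_{n-1}(x)$; note a legal destination is by definition only a right-hand endpoint, so it is automatically a leaf). The divergence is in the third class, and you have diverged at exactly the spot where the paper's own proof is defective. The paper subtracts $\mathcal{X}^{(3)}_{n,i}=\{\tau:\ (-i,i-1)\ \mbox{is an arc}\}$ and claims every such $\tau$ has $i-1$ a destination, on the grounds that the arc forces a $\leftarrow$ in the corner cell $(i-1,i)$. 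Your absorption observation refutes this claim: for $n=3$, $i=2$, the partition with vertices $-2,1,3$ and arcs $(-2,1),(1,3)$ lies in $\mathcal{X}^{(3)}_{3,2}$ with $1$ a transient; under $\Psi_B$ the whole path $(-2,1,3)$ is consumed by the legal destination $3$ and the corner $(1,2)$ is empty. One computes $\sum_{\tau\in\mathcal{X}^{(3)}_{3,2}}x^{os(\tau)-1}=2x+3\neq 2L_2(x)$, so for the paper's literal $\mathcal{X}_{n,i}$ the identity fails: at $n=3,i=2$ the left side is $2x^2+4x+1$ while the right side is $2(x+1)^2$. Your corner-occupied class $\mathcal{E}_2$ (the \emph{leftover}-arc condition) is the set for which $2^{n-2}L_{n-1}(x)$ is the true weight, and it is also what \eqref{beq1} actually requires, since an empty corner corresponds to ``$(-i,i-1)$ is not leftover,'' not to ``$(-i,i-1)$ is not an arc''; the paper's two inaccuracies cancel, which is why its final formula is nonetheless correct.

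The one genuine gap in your write-up is the $\mathcal{E}_2$ mechanism: there is no need (and no room) for a canonical reattachment of the children of $i-1$, because if $(-i,i-1)$ is leftover then $i-1$ has no children at all. Indeed, suppose $i-1$ had an outgoing arc. Following outgoing arcs from $i-1$ one passes through positive vertices larger than $i-1$, none equal to $i$ (since $-i$, not $i$, is a vertex), so any destination $b$ whose backward chain of (unique) incoming arcs reaches $i-1$ satisfies $b\geq i+1>i=|-i|$; hence the maximal good path of the first such $b$ to be processed extends through $i-1$ to $-i$ and consumes $(-i,i-1)$, contradicting leftoverness. Thus every $\tau\in\mathcal{E}_2$ has $i-1$ a destination with unique incoming arc $(-i,i-1)$, your ``easy'' leaf deletion applies verbatim for \emph{every} $i$ (not only the extremes $i=2$ and $i=n$, as you suggest), it is weight preserving since destinations do not count toward $os$, and it bijects $\mathcal{E}_2$ onto the type $B$ linked partitions on the remaining $n-1$ values with $i$ negated, yielding $2^{n-2}L_{n-1}(x)$. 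With this observation substituted for the reattachment step, your argument is complete and disjointness is as you say; moreover it proves the corrected statement (with condition (b4) read as the leftover condition), which is what the proof of Conjecture \ref{con2} needs, whereas the lemma with the printed definition of $\mathcal{X}_{n,i}$ is false as stated.
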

 \begin{proof} For $2\leq i\leq n$,  denote by $\mathcal{X}^{(1)}_{n,i}$ the set of type $B$ linked partitions  $\tau$ of $[n]$ such that vertices $-i$ and $i-1$ are contained in $\tau$.  Let $\mathcal{X}^{(2)}_{n,i}$ denote the subset of type $B$ linked partitions $\tau \in  \mathcal{X}^{(1)}_{n,i}$ such that  vertex $i-1$ is a legal destination of $\tau$. Let $\mathcal{X}^{(3)}_{n,i}$ denote the subset of type $B$ linked partitions $\tau \in  \mathcal{X}^{(1)}_{n,i}$ such that    $\tau$ contains an arc $(-i, i-1)$.  Obviously, we have
\begin{equation}\label{beq4}
 \sum_{\tau\in \mathcal{X}_{n, i} }x^{os(\tau)-1} =\sum_{\tau\in \mathcal{X}^{(1)}_{n, i} }x^{os(\tau)-1}-\sum_{\tau\in \mathcal{X}^{(2)}_{n, i} }x^{os(\tau)-1}-\sum_{\tau\in \mathcal{X}^{(3)}_{n, i} }x^{os(\tau)-1}.
 \end{equation}
 For any type $B$ linked partition $\tau\in \mathcal{X}^{(1)}_{n, i} $, it can be uniquely  represented by an ordered pair $(\tau', I)$, where  $I$ is the vertex set of $\tau$ and $\tau'$ is an ordinary linked partition obtained from $\tau$ by relabeling the vertices of $\tau$ by $1,2,\ldots, n$ from left to right.  Obviously, we have $\{-i, i-1\}\subset I$.  Conversely, given such an ordered pair $(\tau', I)$, we can recover a type $B$ linked partition $\tau\in \mathcal{X}^{(1)}_{n, i} $  by reversing the above procedure.
Moreover, the total  number of origins and singletons of $\tau$ is equal to  that of $\tau'$.

Given a linked partition $\tau'\in \mathcal{L}_n$,  there are $2^{n-2}$ different ways to choose a set $I$ such that $\{-i, i-1\}\subset I$ to construct a
type $B$ linked partition $\tau\in \mathcal{X}^{(1)}_{n, i}$. Hence, we have
\begin{equation}\label{beq5}
 \sum_{\tau\in \mathcal{X}^{(1)}_{n, i} }x^{os(\tau)-1}=2^{n-2}\sum_{\tau'\in \mathcal{L}_n }x^{os(\tau')-1}=2^{n-2}{L_n(x)}.
 \end{equation}
 Let $\mathcal{Z}_{n,i}$ be the set of type $B$ linked partitions $\tau$ of $[n]$ such that vertex $-i$ is contained in $\tau$.
By  similar arguments as above, we have
\begin{equation}\label{beq6}
 \sum_{\tau\in \mathcal{Z}_{n,i} }x^{os(\tau)-1}=2^{n-1}\sum_{\tau'\in \mathcal{L}_n }x^{os(\tau')-1}=2^{n-1}L_n(x).
 \end{equation}

 For a type $B$ linked partition $\tau\in \mathcal{X}^{(2)}_{n, i} $, we can obtain a linked partition $\tau'\in \mathcal{Z}_{n-1, i-1}$ by removing vertex $i-1$ from $\tau$ and relabeling  vertex $j$  by $j-1$  and vertex $-j$ by $-j+1$ for all $j\geq i-1$. Conversely, given a  linked partition $\tau'\in \mathcal{Z}_{n-1, i-1}$, we can recover a linked partition $\tau\in \mathcal{X}^{(2)}_{n, i}$ from $\tau'$ by the following procedure.
  \begin{itemize}
  \item
    We first insert  a vertex labeled by $i-1$
      immediately after vertex $j$ where $j$ is the maximum  integer which is smaller than $i-1$;
  \item Then relabel vertex $\ell$ by $\ell+1$ and  vertex $-\ell$ by $-\ell-1$ for all $\ell>i-1$;
      \item Finally, adjoin an arc from vertex $s$ to  vertex $i-1$ for some $|s|<i-1$.
\end{itemize}
 It is not difficult to see that  the total  number of origins and singletons of $\tau$ remains  the same as that of  $\tau'$.  Hence, we have
 \begin{equation}\label{beq7}
 \sum_{\tau\in \mathcal{X}^{(2)}_{n,i} }x^{os(\tau)-1}=\sum_{\tau'\in \mathcal{Z}_{n-1, i-1}}(i-2)x^{os(\tau')-1}.
 \end{equation}
 From (\ref{beq6}), it follows that
 \begin{equation}\label{beq8}
 \sum_{\tau\in \mathcal{X}^{(2)}_{n,i} }x^{os(\tau)-1}=(i-2)2^{n-2}L_{n-1}(x).
 \end{equation}

  We claim that for any type $B$ linked partition $\tau\in \mathcal{X}^{(3)}_{n, i} $,  vertex $i-1$ is a destination of $\tau$. If not, then  according  to the  the definition of $\Phi_B$, the cell $(i-1, i)$ is filled with a $\leftarrow$ in $T$  with $\Phi_B(T)=\tau$ and  there exists    one additional non-empty cell, say $c$,   in row $i-1$. Recall that the cell $(i-1, i)$ is a corner of $T$. This implies that the cell $c$  is pointed by the $\leftarrow$ filled in the cell $(i-1,i)$, which contradicts the definition of alternative tableaux. Hence, the claim is proved.

  By   similar arguments in the proof of (\ref{beq7}), we can deduce that \begin{equation}\label{beq9}
 \sum_{\tau\in \mathcal{X}^{(3)}_{n,i} }x^{os(\tau)-1}=\sum_{\tau'\in \mathcal{Z}_{n-1, i-1}}x^{os(\tau')-1}.
 \end{equation}
 From (\ref{beq6}), we deduce that
 \begin{equation}\label{beq10}
 \sum_{\tau\in \mathcal{X}^{(3)}_{n,i} }x^{os(\tau)-1}=2^{n-2}L_{n-1}(x).
 \end{equation}
 Combining  (\ref{beq4}), (\ref{beq5}), (\ref{beq8})  and (\ref{beq10}),  we are led to the desired Formula (\ref{beq3}), completing the proof.\end{proof}

 \begin{lemma}
For $n\geq 2$ and $2\leq i\leq n$, we have
\begin{equation}\label{beqy3}
 \sum_{\tau\in \mathcal{Y}_{n,i}}x^{os(\tau)-1}=(i-2)2^{n-2}L_{n-1}(x)-(i-2)^2 2^{n-2}L_{n-2}(x)
\end{equation}
\end{lemma}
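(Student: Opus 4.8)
The plan is to realise $\mathcal{Y}_{n,i}$ as the image of a smaller family under a two-step insertion, in the same spirit as the removal/insertion bijections used for $\mathcal{X}_{n,i}$. First I would describe the local shape of a $\tau\in\mathcal{Y}_{n,i}$. Since vertex $i$ is a legal destination, there is a unique arc $(s,i)$ into $i$ with $|s|<i$. Condition $(b4)$ forbids $s=i-1$, and condition $(b3)$ forces vertex $i-1$ (hence not $-(i-1)$) to be present, so $s\neq-(i-1)$ as well; therefore $|s|\le i-2$. As $\tau$ contains exactly one present vertex of each absolute value in $\{1,\dots,i-2\}$, there are precisely $i-2$ admissible sources $s$.

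Next I would set up the bijection $\tau\mapsto(\tau',s)$: delete vertex $i$ together with the arc $(s,i)$ and relabel every vertex of absolute value larger than $i$ by decreasing its absolute value by one. Deleting the arc into $i$ leaves every arc incident to $i-1$ untouched, so in $\tau'$ the vertex $i-1$ is still present and still not a legal destination; thus $\tau'$ belongs to the family $\mathcal{W}_{n-1,i-1}$ of type $B$ linked partitions of $[n-1]$ in which $i-1$ is present and is not a legal destination. The inverse reinserts a vertex immediately after $i-1$, shifts labels accordingly, and adjoins an arc $(s,i)$ for any of the $i-2$ admissible sources. The step I expect to be the main technical point is checking that $os$ is invariant under this operation: the new vertex $i$ is a pure destination and so is never counted, while the source $s$ only changes type between singleton and origin, or between destination and transient, so its membership in the origin/singleton set is unaffected. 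Granting this, I obtain
\[
\sum_{\tau\in\mathcal{Y}_{n,i}}x^{os(\tau)-1}=(i-2)\sum_{\tau'\in\mathcal{W}_{n-1,i-1}}x^{os(\tau')-1}.
\]

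It then remains to evaluate the inner sum, which I would do by writing it as the difference of the weighted sum over partitions in which $i-1$ is present and the weighted sum over those in which $i-1$ is present and is a legal destination (the latter event being contained in the former, so a plain subtraction suffices). For the first, the sign-choice decomposition $(\tau',I)$ of (\ref{beq5})--(\ref{beq6}), with the single sign of $i-1$ fixed positive, gives $2^{n-2}L_{n-1}(x)$. For the second, deleting the legal destination $i-1$ together with its unique incoming arc (there being $i-2$ choices for the source, of absolute value at most $i-2$) reduces to an arbitrary element of $\mathcal{L}^{B}_{n-2}$ with $os$ again preserved, giving $(i-2)L^{B}_{n-2}(x)$. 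Using $L^{B}_{n-2}(x)=2^{n-2}L_{n-2}(x)$, which follows from (\ref{LB}) and (\ref{LN}), the inner sum equals $2^{n-2}L_{n-1}(x)-(i-2)2^{n-2}L_{n-2}(x)$, and multiplying by $i-2$ yields exactly (\ref{beqy3}). The case $i=2$ is covered automatically, since then $i-2=0$ and $\mathcal{Y}_{n,2}=\emptyset$.
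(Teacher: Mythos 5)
Your proposal is correct and follows essentially the same route as the paper: the paper writes $\sum_{\tau\in\mathcal{Y}_{n,i}}x^{os(\tau)-1}=\sum_{\tau\in\mathcal{Y}^{(1)}_{n,i}}x^{os(\tau)-1}-\sum_{\tau\in\mathcal{Y}^{(2)}_{n,i}}x^{os(\tau)-1}$ and evaluates each term by deleting the legal destination(s) together with their unique incoming arcs (giving $i-2$, respectively $(i-2)^2$, choices upon reinsertion), using the same $2^{n-2}$ sign-choice count and the same invariance of $os$ that you verify. You merely commute the two steps---deleting vertex $i$ first and then performing the inclusion--exclusion on the resulting family over $[n-1]$---and distributing the factor $i-2$ over your difference recovers the paper's two terms $(i-2)2^{n-2}L_{n-1}(x)$ and $(i-2)^2 2^{n-2}L_{n-2}(x)$ exactly.
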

\begin{proof} For $2\leq i\leq n$,  denote by $\mathcal{Y}^{(1)}_{n,i}$ the set of type $B$ linked partitions  $\tau$ such that vertex
 $i-1$  is  contained in $\tau$, vertex $i$ is a legal destination of $\tau$, and $(i-1, i)$ is not an arc of $\tau$.  Let $\mathcal{Y}^{(2)}_{n,i}$ denote the subset of type $B$ linked partitions $\tau \in  \mathcal{Y}^{(1)}_{n,i}$ such that  vertex $i-1$ is a legal destination of $\tau$.   Obviously, we have
\begin{equation}\label{beqy4}
 \sum_{\tau\in \mathcal{Y}_{n, i} }x^{os(\tau)-1} =\sum_{\tau\in \mathcal{Y}^{(1)}_{n, i} }x^{os(\tau)-1}-\sum_{\tau\in \mathcal{Y}^{(2)}_{n, i} }x^{os(\tau)-1}.
 \end{equation}

 Analogous to  the proof of (\ref{beq8}), we can deduce that
 \begin{equation}\label{beqy5}
  \sum_{\tau\in \mathcal{Y}^{(1)}_{n, i} }x^{os(\tau)-1}= \sum_{\tau'\in \mathcal{Z}_{n-1, i-1}}(i-2)x^{os(\tau')-1}=(i-2)2^{n-2}L_{n-1}(x).
 \end{equation}

 For a type $B$ linked partition $\tau\in \mathcal{Y}^{(2)}_{n,i}$, we can obtain a type $B$ linked partition $\tau'\in \mathcal{L}^{B}_{n-2}$ by removing vertices $i-1$ and $i$, and relabeling   vertex $j$ by $j-2$ and vertex $-j$ by $-j+2$  for all $j>i$.
 Conversely, given a  linked partition $\tau'\in \mathcal{L}^{B}_{n-2}$, we can recover a type $B$ linked partition $\tau\in \mathcal{Y}^{(2)}_{n, i}$ from $\tau'$ by the following procedures.
 \begin{itemize}
  \item we first relabel vertex $j$ by $j+2$ and  vertex $-j$ by $-j-2$ for all $j\geq i-1$;
  \item Then we insert  a vertex labeled with $i-1$ and a vertex labeled with $i$  immediately after vertex $j$ where $j$ is the maximum  integer which is smaller than $i-1$;
      \item Finally, adjoin an arc from vertex $\ell$ ( resp. $s$) to  vertex $i-1$ (resp. $i$) for some $|\ell|<i-1$ (resp. $|s|<i-1$).
\end{itemize}

 It is not difficult to see that  the total  number of origins and singletons of $\tau$ is the same as that of  $\tau'$. Thus,
 we have
 \begin{equation}\label{beqy6}
 \begin{array}{lll}
 \sum_{\tau\in \mathcal{Y}^{(2)}_{n,i} }x^{os(\tau)-1}&=&\sum_{\tau'\in \mathcal{L}^{B}_{n-2}}(i-2)^{2}x^{os(\tau')-1}\\
 &=&(i-2)^2 L^{B}_{n-2}(x)\\
 &=&(i-2)^2 2^{n-2}L_{n-2}(x).
 \end{array}
 \end{equation}
 The last equality in the above formula   follows from   (\ref{LB}) and (\ref{LN}).
 Combining   (\ref{beqy4}), (\ref{beqy5}) and (\ref{beqy6}), we are led to (\ref{beqy3}). This completes the proof.\end{proof}

Denote by $\mathcal{AT}^{B}_{n, 1}$ the set of type $B$ alternative tableaux of size $n$   in which the cell $(-1,1)$ is a non-occupied corner. It is easy to check that the map $\Phi_B$ sends an alternative tableau $T\in \mathcal{AT}^{B}_{n, 1}$ to a type $B$ linked partition $\tau\in \mathcal{ L}^B_{n}$ in which    $-1$ is the label of a vertex.   Let $\mathcal{L}^{B}_{n,1}$ denote the set of such type $B$ linked partitions.
Hence, the map $\Phi_B$ is a weight preserving bijection between the set $\mathcal{AT}^{B}_{n, 1}$ and the set $\mathcal{L}^{B}_{n,1}$.
This yields that
   \begin{equation}\label{Bn1}
 \sum_{T\in \mathcal{AT}^{B}_{n, 1} }x^{urr(T)-1}=\sum_{\tau\in \mathcal{L}^{B}_{n,1}}x^{os(\tau)-1}.
 \end{equation}

By similar arguments as in the proof of Formula (\ref{beq5}), we can deduce the following result, and the proof is omitted.
\begin{lemma}
For $n\geq 1$, we have
\begin{equation}\label{beqL}
 \sum_{\tau\in \mathcal{L}^B_{n,1}}w(\tau)= 2^{n-1}L_{n}(x).
\end{equation}
\end{lemma}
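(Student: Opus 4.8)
The plan is to follow exactly the argument used to establish Formula (\ref{beq5}). Recall that in that proof every type $B$ linked partition was encoded by an ordered pair $(\tau', I)$, where $I$ is the vertex set of $\tau$ (equivalently, the record of which integers in $[n]$ are negated) and $\tau'\in\mathcal{L}_n$ is the ordinary linked partition obtained from $\tau$ by relabelling its vertices $1,2,\ldots,n$ from left to right. This encoding is a bijection between type $B$ linked partitions of $[n]$ and pairs $(\tau',I)$ with $\tau'\in\mathcal{L}_n$ and $I$ a choice of signs, and it leaves the arc structure — and hence the set of origins and singletons — unchanged, so that $os(\tau)=os(\tau')$ and the weight $x^{os(\tau)-1}$ is preserved.

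First I would observe that the defining condition of $\mathcal{L}^B_{n,1}$, namely that $-1$ be a vertex of $\tau$, is precisely the requirement $-1\in I$; that is, it fixes the sign of the first vertex to be negative and imposes no constraint whatsoever on the signs of the remaining $n-1$ vertices. Consequently, for each fixed ordinary linked partition $\tau'\in\mathcal{L}_n$ there are exactly $2^{n-1}$ admissible vertex sets $I$ with $-1\in I$, and each gives rise to a distinct element of $\mathcal{L}^B_{n,1}$ with the same value of $os$. Summing over $\mathcal{L}^B_{n,1}$ therefore factors as
\begin{equation*}
\sum_{\tau\in\mathcal{L}^B_{n,1}}x^{os(\tau)-1}=2^{n-1}\sum_{\tau'\in\mathcal{L}_n}x^{os(\tau')-1}=2^{n-1}L_n(x),
\end{equation*}
which is the asserted Formula (\ref{beqL}).

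There is essentially no obstacle here, since this is merely the single-constraint analogue of the two-constraint count in (\ref{beq5}): there the conditions $\{-i,i-1\}\subset I$ pinned down two signs and left $2^{n-2}$ free choices, whereas here only one sign is pinned down, leaving $2^{n-1}$ free choices. The only point requiring a line of care is the verification that the encoding $\tau\mapsto(\tau',I)$ preserves the statistic $os$, but this is immediate because relabelling the vertices from left to right does not alter which vertices are origins, singletons, transients or destinations; this is the same invariance already invoked in the proof of (\ref{beq5}), so that argument may simply be repeated, which is why the authors state the proof can be omitted.
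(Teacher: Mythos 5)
Your proposal is correct and is precisely the argument the paper intends: it invokes the same encoding of a type $B$ linked partition as a pair $(\tau',I)$ used for Formula (\ref{beq5}), with the single constraint $-1\in I$ leaving $2^{n-1}$ free sign choices in place of the two constraints $\{-i,i-1\}\subset I$ that left $2^{n-2}$ there. The paper omits the proof for exactly this reason, and your observation that relabelling preserves $os(\tau)$ matches the invariance already established in the proof of (\ref{beq5}).
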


In the following, we proceed to complete the proof of Conjecture \ref{con2}.

{\noindent \bf Proof of Conjecture \ref{con2}.}
In view of  (\ref{eq2.5}), we have

$$
 \begin{array}{lll}
  \sum_{T\in \mathcal{T}^{sym}_{2n+1}}noc(T)x^{left(T)-1}&=&\sum_{T\in \mathcal{AT}^{sym}_{2n}}noc(T)x^{urr(T)-1}\\
  &=&\sum_{T\in \mathcal{AT}^{B}_{n}}(noc'(T)+2noc''(T))x^{urr(T)-1}\\
  &=& \sum_{T\in \mathcal{AT}^{B}_{n,1}}x^{urr(T)-1} +2\sum_{i=2}^{n}\sum_{T\in \mathcal{AT}^{B}_{n,i}}x^{urr(T)-1}.
  \end{array}
  $$
From (\ref{Bn1}) and (\ref{beqL} ), we deduce that
  $$
  \sum_{T\in \mathcal{AT}^{B}_{n,1}}x^{urr(T)-1}=\sum_{\tau\in \mathcal{L}^{B}_{n,1}}x^{os(\tau)-1}=2^{n-1}L_{n}(x).
  $$

From (\ref{beq1}) and (\ref{beq2}), we deduce that
$$
\begin{array}{lll}
\sum_{i=2}^{n}\sum_{T\in \mathcal{AT}^{B}_{n,i}}x^{urr(T)-1}&=&\sum_{i=2}^{n}\sum_{\tau\in \mathcal{L}^{B}_{n,i}}x^{os(\tau)-1}\\
&=& \sum_{i=2}^{n}\sum_{\tau\in \mathcal{X}_{n, i} }x^{os(\tau)-1}+\sum_{i=2}^{n}\sum_{\tau\in \mathcal{Y}_{n, i} }x^{os(\tau)-1}.
\end{array}
$$

By (\ref{beq3}) and simple computation, we have
$$
\sum_{i=2}^{n}\sum_{\tau\in \mathcal{X}_{n, i} }w(\tau)=(n-1)2^{n-2}L_{n}(x)-{n\choose 2}2^{n-2}L_{n-1}(x).
$$
By (\ref{beqy3}) and simple computation, we have
$$
\sum_{i=2}^{n}\sum_{\tau\in \mathcal{Y}_{n, i} }w(\tau)={n-1\choose 2}2^{n-2}L_{n-1}(x)-{(n-2)(n-1)(2n-3)\over 6}2^{n-2}L_{n-2}(x)
$$
Substituting $L_n(x)=(x+1)_{n-1}$ and $T^{sym}_{2n+1}(x)=L^B_n(x)=2^nL_n(x)$ into the above formulae, we  conclude the proof of Conjecture \ref{con2}.

\section{Concluding notes}
In this paper, we obtain  the polynomial analogues of the number of non-occupied corners in tree-like tableaux and symmetric tree-like tableaux, confirming two conjectures posed by  \cite{Gao}.

In \cite{Lab}, he obtained the following results on the number of occupied corners in tree-like tableaux and symmetric tree-like tableaux.

\begin{theorem}{ \upshape   (See \cite{Lab}, Theorem 3.2)}\label{thoc1}
For $n\geq 1$, the number of occupied corners in tree-like tableaux of size $n$ is given by $n!$.
\end{theorem}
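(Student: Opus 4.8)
The plan is to prove the equivalent counting statement that the total number of occupied corners over all $T\in\mathcal{T}_n$, call it $oc_n$, equals $n!$, by producing an $n$-to-one map from \emph{marked} tableaux onto smaller tableaux. Set $\mathcal{P}_n=\{(T,c): T\in\mathcal{T}_n,\ c \text{ an occupied corner of } T\}$, so that $oc_n=|\mathcal{P}_n|$. Since $T_{n-1}(a,b)=(a+b)_{n-2}$ by (\ref{eqT}) gives $T_{n-1}(1,1)=(2)_{n-2}=(n-1)!$, it suffices to construct a map $\rho:\mathcal{P}_n\to\mathcal{T}_{n-1}$ all of whose fibres have exactly $n$ elements; then $oc_n=n\,|\mathcal{T}_{n-1}|=n!$.

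First I would define $\rho$ by deletion. For $n\ge 2$ an occupied corner $c=(i,j)$ is a non-root point, so by condition (2) it has a point above it in column $j$ (\emph{Case up}) or a point to its left in row $i$ (\emph{Case left}), but not both. In Case up the row $i$ carries no point other than $c$, and I delete row $i$; in Case left the column $j$ carries no point other than $c$, and I delete column $j$. Because $c$ is a leaf (no cell lies below or to the right of a corner) and the surviving line through $c$ still contains a point, one checks the deletion keeps every row and column nonempty and pointed, preserves conditions (1)--(3), and leaves a Ferrers shape, so $\rho(T,c)\in\mathcal{T}_{n-1}$.

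The core of the proof is the fibre count. Fix $T'\in\mathcal{T}_{n-1}$ with $r'$ rows and $c'$ columns, so $r'+c'=n$, and describe the insertions inverting $\rho$. For each $j\in\{1,\dots,c'\}$ there is exactly one \emph{row-insertion}: adjoin a row of length $j$ as the lowest row of length $\ge j$, carrying a single point at its right end. That point is forced to be the bottom of column $j$, hence a corner, and the old points of column $j$ all survive above it, so Case up holds and removing the new row restores $T'$. Dually, for each $h\in\{1,\dots,r'\}$ a single \emph{column-insertion} of height $h$ yields a Case-left corner. These $c'+r'=n$ insertions are pairwise distinct (row-insertions increase the row count, column-insertions the column count, and different $j$, resp.\ $h$, mark different corners), and every $(T,c)\in\rho^{-1}(T')$ is one of them: its type together with the length $j$ (resp.\ height $h$) of the deleted line selects a unique insertion that reconstructs $(T,c)$. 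Hence $|\rho^{-1}(T')|=n$, and summing over $T'$ gives $oc_n=n(n-1)!=n!$; the base case $n=1$ is immediate, the unique cell of the only tableau being an occupied corner.

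I expect the fibre analysis to be the main obstacle. Three things must be verified for every index: that the required point above (resp.\ to the left) is always present, so the inserted point is a genuine occupied corner; that adjoining the new line keeps the shape a partition; and---most delicately---that the listed insertions \emph{exhaust} the fibre. This last completeness step is what forces the count to be exactly $r'+c'=n$ rather than merely at least $n$, and it amounts to checking that $\rho$ is literally undone by the insertion keyed to the data (type, length) read off the deleted row or column. As an independent consistency check, one can note that subtracting the $a=b=1$ specialisation of Conjecture \ref{con1} from the total-corner count yields the same value $n!$.
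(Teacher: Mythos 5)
Your argument is correct, but there is nothing internal to compare it with: the paper does not prove Theorem \ref{thoc1} at all --- it is quoted from \cite{Lab} (Theorem 3.2), with only the remark that Laborde-Zubieta's ``easy proof'' adapts to the $(a,b)$-refinement. Your deletion/insertion double count is a sound, self-contained substitute, and the key completeness step does go through: since a corner $c=(i,j)$ has no cell below or to its right, in Case up the row of $c$ has length exactly $j$, carries $c$ as its unique point, and all lower rows are strictly shorter than $j$, so deleting row $i$ and re-inserting a length-$j$ row below all rows of length $\geq j$ are literally mutually inverse (dually for Case left); the point above $c$ guarantees $i\geq 2$, so the root survives and every column $k\leq j$ retains a point, whence the fibre over $T'$ has exactly $c'+r'=n$ elements, and $|\mathcal{T}_{n-1}|=(2)_{n-2}=(n-1)!$ follows from (\ref{eqT}). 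Conceptually you have reconstructed Laborde-Zubieta's original approach rather than found a new one: your $n$ insertion parameters $j\in\{1,\dots,c'\}$ and $h\in\{1,\dots,r'\}$ are in bijection with the $n$ border edges of a size-$(n-1)$ diagram, so this is the insertion mechanism of \cite{aval} specialised to occupied corners. A pleasant byproduct, worth making explicit, is that your construction proves not just the count but the refinement stated (without proof) at the end of the paper: the insertion with $j=1$ multiplies the weight $a^{top}b^{left}$ by $b$, the one with $h=1$ multiplies it by $a$, and the remaining $n-2$ insertions preserve it, so $oc_n(a,b)=(a+b+n-2)\,T_{n-1}(a,b)=T_n(a,b)$, exactly the adaptation the paper alludes to. The only cosmetic gaps are that you should state explicitly that for $n\geq 2$ the root cell is never a corner (so condition (2) indeed applies to $c$), and the closing ``consistency check'' against Conjecture \ref{con1} presupposes the total-corner count from the literature, so it is a remark, not part of the proof.
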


\begin{theorem}{ \upshape   (See \cite{Lab}, Theorem 3.7)}\label{thoc2}
For $n\geq 1$, the number of occupied corners in symmetric tree-like tableaux of size $2n+1$ is given by $2^n n!$.
\end{theorem}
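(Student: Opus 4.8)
The plan is to reduce the statement to a single scaling recursion under the elementary insertion procedure of \cite{aval}. First I would record, from (\ref{eqsT}) specialized to $x=1$, that $|\mathcal{T}^{sym}_{2n+1}| = T^{sym}_{2n+1}(1) = 2^n n!$. Writing $f(n) = \sum_{T\in\mathcal{T}^{sym}_{2n+1}} oc(T)$ for the total number of occupied corners, where $oc(T)$ denotes the number of occupied corners of $T$, the assertion $f(n) = 2^n n!$ is then equivalent to saying that the average number of occupied corners over $\mathcal{T}^{sym}_{2n+1}$ is exactly $1$. Since $f(0)=1$ (the unique symmetric tree-like tableau of size $1$ is a single pointed cell, which is itself an occupied corner), it suffices to establish the recursion $f(n) = 2n\,f(n-1)$ for $n\ge 1$: indeed $|\mathcal{T}^{sym}_{2n+1}|/|\mathcal{T}^{sym}_{2n-1}| = 2^n n!/(2^{n-1}(n-1)!) = 2n$ is precisely the branching factor of the insertion, so $f(n)=2n\,f(n-1)$ with $f(0)=1$ forces $f(n)=2^n n!$.

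To set up the recursion I would use that every $T\in\mathcal{T}^{sym}_{2n+1}$ is obtained from a unique $S\in\mathcal{T}^{sym}_{2n-1}$ together with a choice among $2n$ admissible insertion positions, which, by the symmetry of $T$ across the main diagonal, occur either as pairs of reflected cells or as single cells on the diagonal. For a fixed parent $S$ and each admissible position I would compare the occupied corners of the resulting child with those of $S$: an insertion may create a new occupied corner at the inserted point, may destroy an occupied corner of $S$ by adjoining a cell along its border, or may leave the occupied-corner count unchanged. The recursion $f(n)=2n\,f(n-1)$ then amounts to the aggregate identity that, summed over all children of all parents, the total occupied-corner count equals $2n\cdot f(n-1)$.

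The main obstacle is exactly this cancellation: proving that the creations and destructions of occupied corners balance when one sums over the $2n$ insertion positions. I would attack it by classifying the admissible positions according to their interaction with the corners and with the main diagonal of $S$, using the reflective symmetry to treat off-diagonal contributions in pairs and handling the diagonal insertions separately; the delicate bookkeeping is to match, position by position, each newly created occupied corner against a destroyed one, so that the per-parent (or at worst aggregate) net change vanishes.

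As an independent check, and as an alternative route valid for $n\ge 3$, I would also exploit the tautology $oc(T) = (\text{number of corners of }T) - noc(T)$. Summing over $\mathcal{T}^{sym}_{2n+1}$ and combining the confirmed Laborde-Zubieta formula for the total number of corners of symmetric tree-like tableaux (see \cite{Gao, Hit}) with the specialization $x=1$ of Conjecture \ref{con2} (just proved here) yields $f(n)=2^n n!$ after a routine simplification, the small cases $n\le 2$ being settled by direct inspection. Here the only work is arithmetic, so this route is quicker to verify but less self-contained than the insertion argument.
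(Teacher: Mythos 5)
The paper itself offers no proof of Theorem~\ref{thoc2}: the statement is quoted verbatim from \cite{Lab} (Theorem~3.7), whose original proof analyzes how the symmetric insertion procedure of \cite{aval} changes the set of occupied corners. Measured against that, your two routes fare very differently. Your first route is, in outline, precisely Laborde-Zubieta's strategy --- reduce to $f(n)=2n\,f(n-1)$ via the $2n$-to-$1$ symmetric insertion --- but as written it has a genuine gap: the entire content of the theorem is the creation/destruction cancellation that you defer. You correctly set up the framework (the branching factor $2n$, $f(0)=1$, the equivalence with ``average one occupied corner''), but the needed balance, whether parent-by-parent or only in the aggregate, is asserted as a plan of attack rather than proved. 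Note also that the insertion of \cite{aval} can append a ribbon of cells, not merely a single cell, so corners of the parent $S$ far from the chosen border edge can be created or destroyed, and the insertions interacting with the main diagonal behave differently from the reflected pairs; classifying these cases is exactly the work done in \cite{Lab}, and without it route one is a restatement of the goal.

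Your second route, by contrast, is complete, correct, and genuinely different from \cite{Lab}: it derives the occupied-corner count from this paper's main theorem plus the external total-corner count. Concretely, at $x=1$ the bracket in Conjecture~\ref{con2} simplifies, for $n\geq 3$, via
\begin{equation*}
2n+2(2n^2-4n+1)+\frac{(n-2)(n-1)(4n-3)}{3}=\frac{n(n-1)(4n+1)}{3},
\end{equation*}
and $T^{sym}_{2n-3}(1)=2^{n-2}(n-2)!$, so the total number of non-occupied corners is $\frac{4n+1}{3}\,2^{n-2}\,n!$; subtracting this from the total corner count $\frac{4n+13}{3}\,2^{n-2}\,n!$ proved in \cite{Gao} and \cite{Hit} leaves exactly $4\cdot 2^{n-2}n!=2^n n!$, with $n=1,2$ checked by hand as you indicate. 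Two cautions make this airtight. First, confirm non-circularity: the proof of Conjecture~\ref{con2} in this paper uses only (\ref{eqsT}) and the bijections $\alpha$, $\gamma$, $\Phi_B$, so it is independent of Theorem~\ref{thoc2}; but you must also make sure the \emph{total}-corner result you cite is not itself obtained by adding Laborde-Zubieta's occupied-corner count to a separately computed non-occupied count --- the PASEP computation of \cite{Hit} counts corners without reference to occupancy, so that is the safe reference. Second, state explicitly that the corner-count formula and the specialization are both used only for $n\geq 3$. With these points attended to, your second route stands on its own; the first should either be completed with the full case analysis of the symmetric insertion or attributed to \cite{Lab} as the original argument.
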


Denote by  $oc(T)$  the number of occupied corners in (symmetric) tree-like tableau $T$.
As remarked in \cite{Gao},   we can adapt Laborde-Zubieta's easy proof of Theorems \ref{thoc1} and \ref{thoc2}  to deduce the following results by keeping track of left and top points.

\begin{theorem}
For $n\geq 1$, we have
$$
oc_n(a,b)=\sum_{T\in \mathcal{T}_n}oc(T)a^{top(T)} b^{left(T)}=T_n(a,b).
$$
\end{theorem}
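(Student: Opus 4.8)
The plan is to adapt the insertion-based proof of Theorem \ref{thoc1} due to \cite{Lab}, carrying the pair of statistics $(top,left)$ through every step. Set
\[
\mathcal{P}_n=\{(T,c):T\in\mathcal{T}_n,\ c\text{ an occupied corner of }T\},
\]
and weight each pair by $a^{top(T)}b^{left(T)}$, so that by definition $oc_n(a,b)=\sum_{(T,c)\in\mathcal{P}_n}a^{top(T)}b^{left(T)}$. The engine of Laborde-Zubieta's argument is that deleting an occupied corner $c$ from a tableau $T'\in\mathcal{T}_n$, together with the row or column of $c$ that thereby becomes empty, produces a bona fide tree-like tableau $T\in\mathcal{T}_{n-1}$; recording the border edge at which $c$ sat then pins down the position of the deletion. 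First I would recall that this yields a bijection $\rho\colon\mathcal{P}_n\to\{(T,e):T\in\mathcal{T}_{n-1},\ e\text{ a border edge of }T\}$, together with its inverse $\rho^{-1}$, which re-inserts an occupied corner at a prescribed border edge. Since the diagram of a size-$(n-1)$ tableau has size $n$ and hence $n$ border edges, this already gives $|\mathcal{P}_n|=n\,|\mathcal{T}_{n-1}|=n!$, which is Theorem \ref{thoc1}.

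Next I would track the effect of $\rho^{-1}$ on $(top,left)$. Fix $T\in\mathcal{T}_{n-1}$ and, for each border edge $e$, write $(T_e,c_e)=\rho^{-1}(T,e)$. Because the insertion takes place along the south-east border, the new corner $c_e$ always lies weakly below the first row and weakly to the right of the first column, so it can influence the topmost row or the leftmost column only in two extremal cases. Concretely, I would show that the unique edge extending the first row to the right raises $top$ by one and fixes $left$ (a factor $a$); the unique edge extending the first column downward raises $left$ by one and fixes $top$ (a factor $b$); and each of the remaining $n-2$ edges places its single new point in some row $\geq 2$ and column $\geq 2$, leaving both statistics untouched (a factor $1$). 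Summing over the $n$ preimages of $T$ then gives the local identity
\[
\sum_{e} a^{top(T_e)}b^{left(T_e)}=(a+b+n-2)\,a^{top(T)}b^{left(T)} .
\]

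Summing this identity over all $T\in\mathcal{T}_{n-1}$ and invoking the bijection $\rho$ collapses the left-hand side to $oc_n(a,b)$, so that
\[
oc_n(a,b)=(a+b+n-2)\sum_{T\in\mathcal{T}_{n-1}}a^{top(T)}b^{left(T)}=(a+b+n-2)\,T_{n-1}(a,b).
\]
By (\ref{eqT}) we have $T_{n-1}(a,b)=(a+b)_{n-2}$, whence
\[
oc_n(a,b)=(a+b+n-2)(a+b)_{n-2}=(a+b)_{n-1}=T_n(a,b);
\]
the case $n=1$ is immediate, since the unique tableau of size $1$ has a single occupied corner and weight $1$.

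The hard part will be the trichotomy in the second paragraph. One must verify that exactly one border edge feeds the first row and exactly one feeds the first column (these two being distinct already when the diagram is a single cell), and that every remaining insertion leaves its new point strictly inside the first row and first column, so that neither statistic can move; here it is crucial that the insertion creates exactly one new point and that the accompanying empty cells carry no weight. This is precisely where the Ferrers shape (longest row on top, tallest column on the left) and the structure of the south-east border enter, and where the corner-deletion of \cite{Lab} must be checked to reproduce the very $(top,left)$ increments that drive the recursion $T_n(a,b)=(a+b+n-2)T_{n-1}(a,b)$ behind (\ref{eqT}). Once this local weight identity is in place, the remaining computation is purely formal.
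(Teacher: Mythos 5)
Your proposal is correct and is precisely the argument the paper has in mind: the paper gives no written proof beyond the remark that Laborde-Zubieta's occupied-corner deletion/insertion bijection between pairs $(T',c)$ of size $n$ and pairs $(T,e)$ with $T\in\mathcal{T}_{n-1}$ and $e$ one of the $n$ border edges can be adapted ``by keeping track of left and top points,'' and your trichotomy (the south step of row $1$ contributing $a$, the west step of column $1$ contributing $b$, the remaining $n-2$ edges contributing $1$) is exactly that adaptation, giving $oc_n(a,b)=(a+b+n-2)\,T_{n-1}(a,b)=(a+b)_{n-1}=T_n(a,b)$. The local weight identity you flag as the hard part does check out, since inserting a row never touches row $1$ and inserting a column never shifts column $1$, so the only weight changes are the two extremal insertions you identify.
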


\begin{theorem}
For $n\geq 1$, we have
$$
\sum_{T\in \mathcal{T}^{sym}_{2n+1}}oc(T)x^{left(T)-1}=T_{2n+1}^{sym}(x).
$$
\end{theorem}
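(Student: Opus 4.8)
Write $oc^{sym}_{2n+1}(x)=\sum_{T\in\mathcal{T}^{sym}_{2n+1}}oc(T)\,x^{left(T)-1}$ for the left-hand side. Since $T^{sym}_{2n+1}(x)=\sum_{T\in\mathcal{T}^{sym}_{2n+1}}x^{left(T)-1}$ by definition, the assertion is equivalent to saying that marking an occupied corner leaves the $left$-generating function unchanged; setting $x=1$ recovers precisely Theorem~\ref{thoc2}, so all of the content lies in the refinement by $left$. The plan is to run Laborde-Zubieta's insertion/deletion argument for Theorem~\ref{thoc2} while carrying the weight $x^{left(T)-1}$ along, and to package the outcome as a recurrence that is then matched against the one satisfied by $T^{sym}_{2n+1}(x)$.

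From $(\ref{eqsT})$ one reads off the recurrence $T^{sym}_{2n+1}(x)=2(x+n-1)\,T^{sym}_{2n-1}(x)$ for $n\ge 2$, with $T^{sym}_{3}(x)=2$. This is realized by Aval et al.'s symmetric insertion, a bijection between $\mathcal{T}^{sym}_{2n-1}\times\{1,2,\dots,2n\}$ and $\mathcal{T}^{sym}_{2n+1}$: for a fixed $T'\in\mathcal{T}^{sym}_{2n-1}$, summing $x^{left-1}$ over the $2n$ insertion slots produces $2(x+n-1)\,x^{left(T')-1}$, the $x$-part recording the slots that enlarge the leftmost column and the constant $2n-2$ recording those that do not. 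Laborde-Zubieta's proof of Theorem~\ref{thoc2} amounts to a bijection between the occupied-corner marked pairs $(T,c)$, with $T\in\mathcal{T}^{sym}_{2n+1}$ and $c$ an occupied corner of $T$, and the set $\mathcal{T}^{sym}_{2n-1}\times\{1,\dots,2n\}$, which is exactly what yields $oc^{sym}_{2n+1}(1)=2n\cdot 2^{n-1}(n-1)!=2^{n}n!$.

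First I would verify that this marking bijection is weight preserving for $x^{left(T)-1}$, so that summing over the $2n$ slots attached to a fixed $T'$ reproduces the same factor $2(x+n-1)$. This gives
$$oc^{sym}_{2n+1}(x)=2(x+n-1)\,oc^{sym}_{2n-1}(x)\qquad(n\ge 2),$$
and, with the base case $oc^{sym}_{3}(x)=2=T^{sym}_{3}(x)$ checked directly on the two symmetric tree-like tableaux of size $3$, an immediate induction identifies $oc^{sym}_{2n+1}(x)$ with $T^{sym}_{2n+1}(x)$. The key bookkeeping is to classify each slot according to whether the occupied corner it creates, together with its diagonal mirror when the corner lies off the diagonal, falls in the leftmost column and hence increments $left$, and to confirm that these contributions aggregate to $2(x+n-1)$ and not merely to $2n$ at $x=1$.

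The main obstacle is that, unlike the plain count, the refined identity is not term by term: the per-tableau corner count $oc(T)$ genuinely fluctuates, already at size $3$ where one of the two symmetric tableaux has no occupied corner and the other has two, so the collapse of $\sum oc(T)\,x^{left(T)-1}$ to $T^{sym}_{2n+1}(x)$ rests on a cancellation between tableaux carrying few and many corners. One must therefore establish the weight-compatibility of the deletion bijection carefully, handling the two regimes of occupied corners correctly: a diagonal corner is self-symmetric and accounts for a single marked pair and a single diagonal insertion slot, whereas an off-diagonal corner occurs in a mirror pair and contributes with multiplicity two. Reconciling these multiplicities with the $left$-weight so that they assemble into precisely $2(x+n-1)$, rather than only evaluating correctly at $x=1$, is where the real work lies; the same scheme, tracking $a^{top}b^{left}$ through the non-symmetric insertion, proves the companion identity $oc_n(a,b)=T_n(a,b)$.
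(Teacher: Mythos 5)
Your plan coincides with the paper's own treatment: the paper gives no detailed proof of this theorem, remarking only (following \cite{Gao}) that Laborde-Zubieta's insertion-based argument for Theorem \ref{thoc2} can be adapted ``by keeping track of left and top points,'' which is exactly your scheme of carrying the weight $x^{left(T)-1}$ through the symmetric insertion/deletion correspondence between $\mathcal{T}^{sym}_{2n+1}$ and $\mathcal{T}^{sym}_{2n-1}\times\{1,\dots,2n\}$ to obtain the factor $2(x+n-1)$ and conclude by induction from the base case $oc^{sym}_{3}(x)=2$. Your recurrence, base case, and the diagonal versus mirror-pair corner bookkeeping are all consistent with that adaptation, so the proposal is essentially the same argument the paper sketches.
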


\acknowledgements
\label{sec:ack}
The authors are very grateful to the referee
for  valuable  comments and suggestions.


\begin{thebibliography}{15}
\providecommand{\natexlab}[1]{#1}
\providecommand{\url}[1]{\texttt{#1}}
\expandafter\ifx\csname urlstyle\endcsname\relax
  \providecommand{\doi}[1]{doi: #1}\else
  \providecommand{\doi}{doi: \begingroup \urlstyle{rm}\Url}\fi

\bibitem[{  Postnikov}(2006)]{Pos}
{ Alexander Postnikov}.
\newblock Total positivity, grassmannians, and networks.
\newblock \emph{arXiv: math.CO \,0609764v1}, 2006.

\bibitem[{   Gao et al.}(2016)]{Gao}
{ Alice L.L. Gao, Emily X.L. Gao, Patxi Laborde-Zubieta, Brian Y. Sun}.
\newblock Enumeration of corners in tree-like tableaux.
\newblock \emph{Discrete Math. Theor. Comput. Sci.}, 18:3:\penalty0 17, 2016.

\bibitem[{  Dykema}(2007)]{Dy}
{ Kenneth J. Dykema}.
\newblock Multilinear function series and transforms in free probability
  theory.
\newblock \emph{Adv. Math.}, 208:\penalty0 351--407, 2007.

\bibitem[{  Laborde-Zubieta}(2015)]{Lab}
{ Patxi Laborde-Zubieta}.
\newblock Occupied corners in tree-like tableaux.
\newblock \emph{S$\acute{e}$m. Lothar. Combin.}, 14:\penalty0 74:Art. B74b.,
  2015.

\bibitem[{ Hitczenko and  Lohss}(2016)]{Hit}
{ Pawel Hitczenko, Amanda Lohss}.
\newblock Corners in tree-like tableaux.
\newblock \emph{Electronic J. Combin.}, 23(4):\penalty0 P4.26, 2016.

\bibitem[{  Nadeau}(2011)]{Nad}
{ Philippe Nadeau}.
\newblock The structure of alternative tableaux.
\newblock \emph{J. Combin. Theory Ser. A}, 118:\penalty0 1638--1660, 2011.

\bibitem[{  Viennot}(2008)]{Vi}
{ Xavier Viennot}.
\newblock Alternative tableaux, permutations and partially asymmetric exclusion
  process.
\newblock \emph{Slides of a talk at the Isaac Newton Institute in Cambridge},
  2008.

\bibitem[{  Aval et al.}(2013)]{aval}
{Jean-Christophe Aval, Adrien Boussicault, Philippe Nadeau}.
\newblock Tree-like tableaux.
\newblock \emph{Electronic J. Combin.}, 20(4):\penalty0 Paper 34, 2013.

\bibitem[{  Corteel}(2007)]{Cor0}
{Sylvie Corteel}.
\newblock Crossings and alignments of permutations.
\newblock \emph{Adv. in Appl. Math.}, 38:\penalty0 149--163, 2007.

\bibitem[{  Corteel and  Kim}(2009)]{Cor2}
{Sylvie Corteel, Jang Soo Kim}.
\newblock Combinatorics on permutation tableaux of type a and type b.
\newblock \emph{European J. Combin.}, 32:\penalty0 563--579, 2009.

\bibitem[{  Corteel and  Williams}(2007{\natexlab{a}})]{Cor3}
{Sylvie Corteel, Lauren K. Williams}.
\newblock Tableaux combinatorics for the asymmetric exclusion process.
\newblock \emph{Adv. in Appl. Math.}, 39:\penalty0 293--310,
  2007{\natexlab{a}}.

\bibitem[{ Corteel and  Williams}(2007{\natexlab{b}})]{Cor4}
{Sylvie Corteel, Lauren K. Williams}.
\newblock A markov chain on permutations which projects to the pasep.
\newblock \emph{Int. Math. Res. Not. IMRN}, 17:\penalty0 Art. ID rnm055, 27,
  2007{\natexlab{b}}.

\bibitem[{  Corteel and  Nadeau}(2009)]{Cor1}
{Sylvie Corteel, Philippe Nadeau}.
\newblock Bijections for permutation tableaux.
\newblock \emph{European J. Combin.}, 30:\penalty0 295--310, 2009.

\bibitem[ { Chen et al.}(2013)]{Chen2}
{William Y.C. Chen, Lewis H. Liu, Carol J. Wang}.
\newblock Linked partitions and permutation tableaux.
\newblock \emph{Electronic J. Combin.}, 20(3):\penalty0 P53, 2013.

\bibitem[{  Chen et al.}(2008)]{Chen1}
{William Y.C. Chen, Susan Y.J. Wu, Catherine H. Yan}.
\newblock Linked partitions and linked cycles.
\newblock \emph{European J. Combin.}, 29:\penalty0 1408--1426, 2008.

\end{thebibliography}
\end{document}